\theoremstyle{plain}
\newtheorem{Theorem}{Theorem}
\newtheorem{Lemma}{Lemma}[section]
\newtheorem{Corollary}[Lemma]{Corollary}
\newenvironment{Proof}
{\begin{trivlist}\item[]{{\sc Proof.}}}{\hfill{$\square$}\noindent\end{trivlist}}
\theoremstyle{definition}
\theoremstyle{remark}
\newtheorem{Remark}[Lemma]{Remark}
\newcommand{\supp}{\operatorname{supp}}
\newcommand{\wt}{\operatorname{wt}}
\DeclareMathOperator{\Sim}{Sim} 
\DeclareMathOperator{\RM}{RM} 
\DeclareMathOperator{\PC}{PC} 
\newcommand{\eff}{\operatorname{eff}}
\begin{document}


\title{Classification of $\Delta$-divisible linear codes\\spanned by codewords of weight $\Delta$}

\date{}

 \author{Michael Kiermaier and Sascha Kurz}
 \address{Michael Kiermaier, University of Bayreuth, 95440 Bayreuth, Germany}
 \email{michael.kiermaier@uni-bayreuth.de}
 \address{Sascha Kurz, University of Bayreuth, 95440 Bayreuth, Germany}
 \email{sascha.kurz@uni-bayreuth.de}

\begin{abstract}
We classify all $q$-ary $\Delta$-divisible linear codes which are spanned by codewords of weight $\Delta$.
The basic building blocks are the simplex codes, and for $q=2$ additionally the first order Reed-Muller codes and the parity check codes.
This generalizes a result of Pless and Sloane, where the binary self-orthogonal codes spanned by codewords of weight $4$ have been classified, which is the case $q=2$ and $\Delta=4$ of our classification.
As an application, we give an alternative proof of a theorem of Liu on binary $\Delta$-divisible codes of length $4\Delta$ in the projective case.
\end{abstract}

\maketitle

\section{Introduction}
\label{sec:introduction}

In this article, all codes are tacitly considered to be linear.
An $[n,k]_q$ \emph{code} $C$ is a $k$-dimensional subspace of the $n$-dimensional $\mathbb{F}_q$-vector space $\mathbb{F}_q^n$.
A \emph{generator matrix} of a linear $[n,k]_q$-code $C$ is a matrix whose rows form a basis of $C$.
The generator matrix is called \emph{systematic} if it starts with a $k$-by-$k$ unit matrix.
Up to a permutation of the positions, every linear code has a systematic generator matrix.

Elements $c\in C$ are called \emph{codewords} and $n = n(C)$ is called the \emph{length} of the code.
The \emph{support} of a codeword $c$ is the number of coordinates with a non-zero entry, i.e., $\supp(c)=\left\{i\in\{1,\dots,n\}\mid c_i\neq 0\right\}$. 
The \emph{(Hamming-)weight} $\wt(c)$ of a codeword is the cardinality $\#\supp(c)$ of its support.
A code $C$ is called \emph{$\Delta$-divisible} if the weight of every codeword is divisible by some integer $\Delta\ge 1$.
Divisible codes are important in coding theory and have applications in finite geometry, for example, see the surveys \cite{ward2001divisible} and \cite{honold2018partial}.
The classification of divisible codes is a hard problem and has been solved only in special cases.
Recent results into this direction can be found in \cite{Liu-2010-IntJInfCodingTheory1[4]:355-370,Doran-Faux-Gates-Huebsch-Iga-Landweber-Miller-2011-AdvTheorMathPhys15[6]:1909-1970,Betsumiya-Munemasa-2012-JLMSSS86[1]:1-16, Heinlein-Honold-Kiermaier-Kurz-Wassermann-2017-WCC, Honold-Kiermaier-Kurz-Wassermann-2020-IEEETIT66[5]:2713-2716, Kiermaier-Kurz-2020-IEEETIT66[7]:4051-4060}.

Given an $[n,k]_q$ code $C$, the $[n,n-k]_q$ code $C^\perp=\left\{x\in\mathbb{F}_q^n\mid x^\top y=0\,\forall y\in C\right\}$ is called the \emph{orthogonal}, or \emph{dual} code of $C$.
A code is called \emph{self-orthogonal} if $C\subseteq C^\perp$ and \emph{self-dual} if $C=C^\perp$. 
Any self-orthogonal binary code is $2$-divisible, and any $4$-divisible binary code is self-orthogonal.
In \cite[Th.~6.5]{pless1975classification}, indecomposable self-orthogonal binary codes which are spanned by codewords of weight $4$ are completely characterized.
Based on the property that self-orthogonal binary codes spanned by codewords of weight $4$ are always $4$-divisible, we are going to generalize that result.
For this, the property of self-orthogonality is replaced by divisibility, which is in the spirit of the generalization of the theorem of Gleason and Pierce (see \cite[Sec.~6.1]{Sloane-1979-SelfDualCodesAndLattices}) by Ward \cite[Th.~2]{Ward-1981-ArchMath36[6]:485-494}.
In fact, this was Ward's original motivation for studying divisible codes.

The main goal of this article is to prove the following characterization of $q$-ary $\Delta$-divisible codes that are generated by codewords of weight $\Delta$.
\begin{Theorem}
	\label{thm:main}
	Let $\Delta$ be a positive integer and let $a$ be the largest integer such that $q^a$ divides $\Delta$.
	Let $C$ be a $q$-ary $\Delta$-divisible linear code that is spanned by codewords of weight $\Delta$.
	Then $C$ is isomorphic to the direct sum of codes of the following form, possibly extended by zero positions.
	\begin{enumerate}[(i)]
		\item The $\frac{\Delta}{q^{k-1}}$-fold repetition of the $q$-ary simplex code of dimension $k\in\{1,\ldots,a+1\}$.
	\end{enumerate}
	In the binary case $q=2$ additionally:
	\begin{enumerate}[resume*]
		\item The $\frac{\Delta}{2^{k-2}}$-fold repetition of the binary first order Reed-Muller code of dimension $k\in\{3,\ldots,a+2\}$.
		\item For $a \geq 1$: The $\frac{\Delta}{2}$-fold repetition of the binary parity check code of dimension $k \geq 4$.
	\end{enumerate}
	Up to the order, the choice of the codes is uniquely determined by $C$.
\end{Theorem}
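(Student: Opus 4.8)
The plan is to extract the combinatorial skeleton of $C$ from the pairwise overlaps of its spanning weight-$\Delta$ codewords, and then to classify the resulting indecomposable pieces. After deleting zero coordinates we may assume $C$ is effective. For two linearly independent codewords $c,d$ of weight $\Delta$, write $s=\#(\supp(c)\cap\supp(d))$ and, for $\alpha\in\mathbb{F}_q^\ast$, let $m_\alpha$ be the number of common-support positions with $c_i=-\alpha d_i$. A direct count gives $\wt(c+\alpha d)=2\Delta-s-m_\alpha$, and $\Delta$-divisibility forces $s+m_\alpha\in\{0,\Delta\}$ for every $\alpha$. From this I would deduce the \emph{overlap dichotomy}: either $\supp(c)$ and $\supp(d)$ are disjoint, or $q\mid\Delta$ and they meet in exactly $(q-1)\Delta/q$ positions with each ratio class $m_\alpha$ of size $\Delta/q$; in particular equal supports force proportionality. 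This is the engine of the whole argument, and it already explains why nontrivial overlaps, and hence building blocks of dimension $\ge 2$, can occur only when $a\ge 1$.

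Next I would form the graph $G$ whose vertices are the one-dimensional subspaces spanned by the weight-$\Delta$ codewords, joining two of them when their supports overlap. By the dichotomy, codewords lying in different connected components of $G$ have disjoint supports; since $C$ is effective and spanned by weight-$\Delta$ codewords, the coordinate set partitions into the unions of the supports of the components, and $C$ is the coordinate-disjoint direct sum of the subcodes spanned by the individual components. Each summand is again $\Delta$-divisible and spanned by codewords of weight $\Delta$, so it suffices to classify the case in which $G$ is connected.

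For connected $C$ I would split according to whether $C$ is constant-weight. If every nonzero codeword has weight $\Delta$, then by Bonisoli's theorem on constant-weight codes $C$ is a repetition of a simplex code, and integrality of the repetition factor $\Delta/q^{k-1}$ pins the dimension to $k\le a+1$, giving family (i) for all $q$. The remaining, and hardest, case is $q=2$ with a codeword of weight exceeding $\Delta$. Here the overlap number is $\Delta/2$, and I expect to recover, in the spirit of \cite{pless1975classification}, a rigid incidence structure on the coordinate classes of size $\Delta/2$ cut out by the spanning codewords; a weight-$2\Delta$ codeword playing the role of an all-ones vector then lets me coordinatize and identify $C$ as a repetition of a first-order Reed--Muller code (family (ii), $k\le a+2$) or, when the configuration is that of a projective frame, of a parity-check code (family (iii), $k\ge 4$, $a\ge 1$). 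I anticipate driving the binary classification by an induction that halves $\Delta$ and lowers $a$ by one, with the case $\Delta=4$ of \cite{pless1975classification} as the base; the delicate point is the clean separation of the three families together with the low-dimensional coincidences (a simplex code of dimension $2$ versus a parity-check code of dimension $2$, and a Reed--Muller code of dimension $3$ versus a parity-check code of dimension $3$), which forces the dimension restrictions $k\ge 3$ in (ii) and $k\ge 4$ in (iii) so that each isomorphism type is listed exactly once. This step is the main obstacle.

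Finally, uniqueness of the decomposition would follow because the connected-component partition is intrinsic to $C$, and the three families are distinguished within a component by isomorphism invariants: constant weight characterizes the simplex codes, while the maximal nonzero weight $2\Delta$ versus a larger maximal weight separates the Reed--Muller codes from the parity-check codes. Hence the multiset of summands, and the lengths of the appended zero positions, are recovered from $C$ up to order.
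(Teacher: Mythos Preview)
Your overlap dichotomy is exactly the paper's Lemma~3.1, and your graph decomposition into connected components is equivalent to the paper's reduction to full-length indecomposable summands (the paper invokes the Krull--Schmidt theorem for linear codes, but your argument that a connected overlap graph forces indecomposability is fine, since a minimum-weight codeword in a direct sum must lie in one summand). The constant-weight branch via Bonisoli also matches.

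There are two gaps. The smaller one is that for $q\ge 3$ you assert, but do not argue, that a connected (indecomposable) piece must be constant-weight. The paper handles this by a short length estimate: with $C'=\langle B\setminus\{c\}\rangle$ a repeated simplex code by induction on the dimension, one has $\#\supp(C')<\frac{q}{q-1}\Delta$, and the overlap dichotomy gives $n(C)<\#\supp(C')+\frac{1}{q}\Delta<2\Delta$, which forces constant weight. Without some such bound your reduction to Bonisoli is incomplete.

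The larger gap is the binary non-constant case. The paper does \emph{not} induct on $\Delta$; it inducts on the dimension $k$. A chain lemma (Lemma~3.2) produces an indecomposable hyperplane $C'=\langle B\setminus\{c\}\rangle$, which by induction is a repeated $\Sim_2$, $\RM_2$, or $\PC_2$; three explicit ``extension lemmas'' (Lemmas~3.4--3.6) then determine the isomorphism type of $C=\langle C',c\rangle$ in each case, using the overlap dichotomy, a residual/length argument, and (for the Reed--Muller extension) the first four power moments to kill a spurious branch via $B_3\ge 0$. Your proposed induction that halves $\Delta$ is a genuinely different route, and as stated it is not yet a proof: you do not specify the halving mechanism (a residual with respect to a weight-$\Delta$ codeword does yield a $\Delta/2$-divisible code spanned by weight-$\Delta/2$ codewords, but that residual can become decomposable and you must explain how to lift the classification back to $C$), and taking the $\Delta=4$ theorem of Pless--Sloane as a black-box base case is precisely the result you are meant to be generalizing. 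The ``rigid incidence structure on coordinate classes of size $\Delta/2$'' you invoke is real, but it is specific to the parity-check branch and does not by itself separate $\RM_2$ from $\PC_2$; the paper needs the MacWilliams/power-moment computation in Lemma~3.6 for that. So the binary classification needs either the paper's dimension induction with the three extension lemmas, or a fully worked-out version of your $\Delta$-halving with an explicit lifting step.
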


We would like to point out the following border cases which are covered by Theorem~\ref{thm:main}.
\begin{enumerate}[(a)]
	\item For any positive integer $\Delta$, the $[n,0]_q$ zero code of length $n$ is $\Delta$-divisible and spanned by an empty set of codewords of weight $\Delta$.
	It is covered by Theorem~\ref{thm:main} as an empty direct sum, extended by $n$ zero positions.
	\item The value of $a$ can be $0$.
	This appears whenever $\Delta$ is coprime to $q$, including also the trivial situation $\Delta = 1$.
	Here, only direct sums of $\Delta$-fold repeated simplex codes of dimension $1$ appear, which are the same as the ordinary repetition codes of length $\Delta$.
	In the case $\Delta = 1$ this means that $C$ is the full Hamming space of some dimension, possibly extended by zero positions.
\end{enumerate}

Further related work includes the classical result of Bonisoli characterizing constant weight codes \cite{bonisoli1983every} and the generalization to two-weight codes whose bigger weight equals the length of the code \cite{jungnickel2018classification}.

\section{Preliminaries}\label{sec:prelim}
Let $C$ be a $q$-ary linear code.
If $\dim(C) = 0$, $C$ is called a \emph{zero code}.
A position where all codewords of a linear code $C$ have a zero entry is called a \emph{zero position}.
Equivalently, all generator matrices have a zero column at that position.
A code without zero positions is called \emph{full-length}.
Zero positions are irrelevant for many aspects in coding theory, such that it is often sufficient to restrict the investigation to full-length codes.

Two vectors $v,w$ over $\mathbb{F}_q$ are called \emph{(projectively) equivalent}, denoted as $v\sim w$, if there is a non-zero scalar $\lambda\in\mathbb{F}_q^\times$ with $v = \lambda w$.
The highest number of equivalent positions of a code is called its \emph{maximum multiplicity}.
A full-length code of maximum multiplicity $\leq 1$ is called \emph{projective}.
The zero code of length $0$ is the unique code of maximum multiplicity $0$.

For $M \subseteq\mathbb{F}_q^n$, we define $\supp(M)=\bigcup_{c\in M}\supp(c)$, where $\supp(\emptyset) = \emptyset$.
The number $n_{\eff}(C) \coloneqq \#\supp(C)$ is called the \emph{effective length} of $C$.
The effective length equals the number of non-zero positions of $C$.
The code $C$ is full-length if and only if the effective length of $C$ coincides with the ordinary length.

Let $N = \{1,\ldots,n\}$ and $c\in\mathbb{F}_q^n$.
For $I\subseteq N$, we denote the restriction of $c$ to the positions in $I$ by $c_I$, that is $c_I = (c_i)_{i\in I}$.
The restriction of a $[n,k]_q$-code $C$ of length $n$ is defined as $C_I = \{c_I \mid c\in C\}$.
It is known as the code \emph{punctured} in $N\setminus I$.
For $c\in C$, the punctured code $C_{N\backslash\supp(c)}$ is called the \emph{residual} with respect to $c$.
Its dimension is at most $k-1$ but may also be strictly less.
The extremal situation that the residual is a zero code appears if and only if $\supp(c) = \supp(C)$.
Note that residuals preserve the properties full-length and projective.

By $A_i(C)$ we denote the number of codewords of weight $i$ in $C$ and by $B_i(C)$ the number of codewords of weight $i$ in $C^\perp$. 
Mostly, we will just write $A_i$ and $B_i$, whenever the code $C$ is clear from the context.
We always have $A_0=B_0=1$.
A code with only a single non-zero weight is called \emph{constant weight code}.
Furthermore, the code $C$ is full-length if and only if $B_1=0$, and it is projective if and only if $B_2 = B_1 = 0$.
The \emph{weight distribution} of $C$ is the sequence $(A_i)$.
The \emph{weight enumerator} of $C$ is the polynomial $W_C(x)=\sum_{i=0}^n A_ix^i\in\mathbb{Z}[x]$, and the \emph{homogeneous weight enumerator} of $C$ is the homogeneous polynomial $W_C(x,y) = \sum_{i=0}^n A_i x^{n-i} y^i\in\mathbb{Z}[x,y]$.

The weight distributions $(A_i)$ and $(B_i)$ are related by the famous MacWilliams identities \cite{MacWilliams-1963-BSTJ42[1]:79-94}.
For all $i\in\{0,\ldots,n\}$,
\[
	\sum_{j=0}^{n-i} \binom{n-j}{i} A_j = q^{k-i}\sum_{j=0}^i \binom{n-j}{n-i} B_j\text{.}
\]
A compact form of the MacWilliams identities is given by the polynomial equation
\[
	W_{C^\perp}(x,y) = \frac{1}{\#C} W_C(x + (q-1)y,\, x-y)
\]
involving the homogeneous weight enumerators.

There are several alternative forms of the MacWilliams equations, among them the \emph{(Pless) power moments}, see \cite{pless1963power}. 
For full-length binary codes, the first four Pless power moments are
\begin{eqnarray}
  \sum_{i=1}^n A_i &=& 2^k-1,\label{eq_ppm1}\\
  \sum_{i=1}^n iA_i &=& 2^{k-1}n,\label{eq_ppm2}\\
  \sum_{i=1}^n i^2A_i &=& 2^{k-2}(n(n+1) + 2B_2),\label{eq_ppm3}\\ 
  \sum_{i=1}^n i^3A_i &=& 2^{k-3}(n^2(n+3) + 6nB_2 - 6B_3 )\label{eq_ppm4}.
\end{eqnarray}

Two $q$-ary linear codes $C,C'$ of the same length $n$ are called \emph{isomorphic} or \emph{equivalent}, denoted as $C\cong C'$, if $C$ can be mapped to $C'$ via an isometry of the ambient Hamming space $\mathbb{F}_q^n$, which preserves the $\mathbb{F}_q$-linearity of codes.
For $n \geq 3$, the group of these isometries is given by the semimonomial group on $\mathbb{F}_q^n$.
For binary codes, it reduces to the symmetric group $\mathcal{S}_n$ acting on the positions.
Furthermore, for two $q$-ary linear codes $C,C'$, not necessarily of the same length, we write $C \cong_0 C'$ if the codes $C$ and $C'$ are isomorphic after removing zero positions.
If $C$ and $C'$ have the same length, $C\cong_0 C'$ is equivalent to $C\cong C'$.

The direct sum of an $[n,k]_q$-code $C$ and an $[n',k']_q$-code $C'$ is the $[n+n',k+k']_q$ code 
\[
	C\oplus C'=\left\{(c_1, \ldots, c_n, c'_1,\ldots,c'_n)\mid\left(c_1,\dots,c_n\right)\in C, \left(c'_1,\dots,c'_n\right)\in C' \right\}\text{.}
\]
If $G$ is a generator matrix of $C$ and $G'$ a generator matrix of $C'$, a generator matrix of $C\oplus C'$ is given by $(\begin{smallmatrix}G & \boldsymbol{0} \\ \boldsymbol{0} & G'\end{smallmatrix})$.
A code is called \emph{decomposable} if it is isomorphic to a direct sum of two non-zero codes.
Otherwise, it is called \emph{indecomposable}.
We note that indecomposable codes may have zero positions.
All zero codes and all linear codes of dimension $1$ are indecomposable.
A code $C$ is decomposable if and only if there are non-zero subcodes $D,D'$ with $D + D' = C$ and $\supp(D) \cap \supp(D') = \emptyset$.
Each full-length linear code is isomorphic to an essentially unique direct sum of indecomposable full-length linear codes.
For the original proof in the binary case see \cite[Th.~2]{slepian1960some} and for the general case \cite[Th.~6.2.7]{Betten-Braun-Fripertinger-Kerber-Kohnert-Wassermann-2006}.

To show that some code is indecomposable, the following lemma may be helpful.
It is essentially \cite[6.2.13]{Betten-Braun-Fripertinger-Kerber-Kohnert-Wassermann-2006}.
\begin{Lemma}
	\label{lem:indecomposable_gm_characterization}
	Let $G = (I_k\; A)$ be a systematic generator matrix, where $I_k$ denotes the $k\times k$ unit matrix.
	We consider the set of non-zero entries of $A$ as the vertex set of a simple graph $\mathcal{G}$, where two distinct vertices are connected by an edge if and only if they are entries in the same row or the same column of $A$.

	Then the code generated by $G$ is indecomposable if and only if $\mathcal{G}$ has a connected component containing entries from each row of $A$.
\end{Lemma}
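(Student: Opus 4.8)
The plan is to reduce the statement to a combinatorial condition on the rows of $A$ and then to translate that condition into the connectivity of $\mathcal{G}$. Throughout I would use the characterization recalled just before the lemma: the code $C$ generated by $G=(I_k\;A)$ is decomposable if and only if there are non-zero subcodes $D,D'$ with $D+D'=C$ and $\supp(D)\cap\supp(D')=\emptyset$. I will prove the (equivalent, negated) statement that $C$ is decomposable precisely when no component of $\mathcal{G}$ contains entries from every row.

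First I would show that, thanks to the systematic form, every such decomposition is aligned with the generator rows $g_1,\dots,g_k$ of $G$. If $D,D'$ are as above, then disjoint supports force $D\cap D'=0$, hence $C=D\oplus D'$ and $\supp(C)=\supp(D)\sqcup\supp(D')$; consequently the coordinate restriction $c\mapsto c|_{\supp(D)}$ maps $C$ into $D$. Now fix a generator $g_i$. The information position $i$ lies in exactly one of $\supp(D),\supp(D')$; say $i\in\supp(D)$. Then the component $g_i|_{\supp(D')}\in D'$ has a zero entry in every one of the first $k$ positions (at position $i$ because $i\notin\supp(D')$, at the remaining information positions because $G$ starts with $I_k$). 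Since a systematic code has no non-zero codeword with vanishing information part, $g_i|_{\supp(D')}=0$, i.e. $g_i\in D$. Thus each $g_i$ lies entirely in $D$ or in $D'$, and writing $R=\{i:g_i\in D\}$, $R'=\{i:g_i\in D'\}$ yields a partition $\{1,\dots,k\}=R\sqcup R'$ into two non-empty parts. The disjointness of supports then amounts exactly to the condition that no column of $A$ is non-zero in both an $R$-row and an $R'$-row. Conversely, any such partition produces a genuine decomposition, since the $g_i$ with $i\in R$ and those with $i\in R'$ then have disjoint supports while jointly spanning $C$.

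This reduces everything to the combinatorial equivalence: $C$ is decomposable exactly when the rows of $A$ admit a non-trivial two-colouring meeting no column in both colours. Introducing the auxiliary graph $H$ on the row indices, with $i$ and $i'$ adjacent whenever some column of $A$ is non-zero in both rows, this says $C$ is indecomposable if and only if $H$ is connected. The last step is to match the components of $H$ with those of $\mathcal{G}$: an $H$-path through rows sharing common columns lifts to a $\mathcal{G}$-path (same-column steps between two entries of a shared column, same-row steps to travel along a row), and every $\mathcal{G}$-path projects onto a walk in $H$. Hence the vertices of $\mathcal{G}$ in rows $i$ and $i'$ lie in the same $\mathcal{G}$-component if and only if $i,i'$ lie in the same $H$-component, so $\mathcal{G}$ has a single component meeting every row exactly when $H$ is connected and every row of $A$ is non-zero.

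I expect the main obstacle to be the bookkeeping around degenerate rows and columns. An all-zero column of $A$ is merely a zero position and may be ignored, but an all-zero row $g_i=e_i$ contributes no vertex to $\mathcal{G}$ and must be tracked: for $k\ge 2$ it splits off a one-dimensional summand and simultaneously isolates $i$ in $H$, so the statements \emph{"$H$ connected"} and \emph{"$\mathcal{G}$ meets every row"} still coincide. The only genuinely exceptional situation is $k=1$, which is indecomposable by definition and should be disposed of separately. Making the equivalence of $H$-components and $\mathcal{G}$-components precise, and verifying that it survives these degeneracies, is where the argument needs the most care.
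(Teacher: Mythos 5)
Your proposal is correct, but there is no proof in the paper to compare it against: the paper states this lemma without proof, citing it as essentially [6.2.13] of the monograph by Betten, Braun, Fripertinger, Kerber, Kohnert and Wassermann, so your argument supplies what the paper omits. Your route is the natural one and it works: starting from the paper's support-disjoint characterization of decomposability, the systematic form forces every decomposition $C=D\oplus D'$ to split the generator rows themselves (the key observation that $g_i|_{\supp(D')}$ is a codeword of $C$ with vanishing information part, hence zero, is exactly right), which translates decomposability into a non-trivial bipartition of the rows of $A$ such that no column is non-zero on both sides; this is precisely disconnectedness of your row graph $H$, and the lifting/projection of paths between $H$ and $\mathcal{G}$ (same-column steps realize $H$-edges, same-row steps move along a row) finishes the equivalence. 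Two small points deserve explicit care in a full write-up. First, the non-emptiness of $R=\{i: g_i\in D\}$ and $R'=\{i: g_i\in D'\}$ is asserted but not argued; it follows in one line, since $R'=\emptyset$ would give $D=C$, forcing $\supp(D')\subseteq\supp(D)$, hence $\supp(D')=\emptyset$ and $D'=\mathbf{0}$, a contradiction. Second, your diagnosis of the degenerate cases is accurate and worth keeping: for $k\geq 2$ an all-zero row of $A$ makes both sides of the stated equivalence fail simultaneously, whereas for $k=1$ with $A$ a zero row the code is indecomposable yet $\mathcal{G}$ is empty, so the lemma holds there only under a suitable convention --- a harmless boundary issue of the statement itself rather than a gap in your proof.
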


For a code $C$ we denote the $m$-fold repetition of $C$ by $m\cdot C$.
A code $C$ is a $\Delta$-divisible $[n,k]_q$-code if and only if $m\cdot C$ is an $m\Delta$-divisible $[mn,k]_q$-code.
Moreover, $C$ is indecomposable if and only if $m\cdot C$ is indecomposable.
For the weight enumerator we have $W_{m\cdot C}(x)=W_C(x^m)$.

In this article, we want to classify all $\Delta$-divisible $q$-ary linear codes which are generated by codewords of weight $\Delta$.
The investigated property is invariant under forming direct sums and adding zero positions, such that it suffices to restrict the classification to indecomposable full-length codes with the stated property.
These codes turn out to be repetitions of members of the following three families.

\begin{enumerate}[(i)]
\item
The $q$-ary simplex code $\Sim_q(k)$ of dimension $k \geq 1$.
A generator matrix can be constructed column-wise by taking a set of projective representatives of the non-zero vectors in $\mathbb{F}_q^k$.
In the geometric setting, the simplex code $\Sim_q(k)$ is the set of all points contained in a projective space of algebraic dimension $k$.
Its parameters are $[\frac{q^k-1}{q-1},k]_q$ and the corresponding weight enumerator is
\[
	W_{\Sim_q(k)}(x) = 1 + (q^k-1) x^{q^{k-1}}\text{.}
\]
So it is a code of constant weight $\Delta = q^{k-1}$ and in particular, the code is $\Delta$-divisible and spanned by codewords of weight $\Delta$.
Moreover, the code $\Sim_q(k)$ is full-length and projective and by Lemma~\ref{lem:indecomposable_gm_characterization}, it is indecomposable.

\item
The $q$-ary first order Reed-Muller code $\RM_q(k)$ of dimension $k \geq 2$.
A generator matrix can be constructed in the following way:
Take all vectors in $\mathbb{F}_q^{k-1}$ as the columns of a matrix, and extend that matrix by an additional row consisting of the all-one vector.
Stated slightly different, $\RM_q(k)$ is the span of $(q-1)\cdot \Sim_q(k-1)$, extended by a zero position, and a vector of weight $q^k$.
In the geometric setting, the first order Reed-Muller code is the set of all points contained in an affine space of dimension $k-1$.
Its parameters are $[q^{k-1},k]_q$ and the corresponding weight enumerator is
\[
	W_{\RM_q(k)}(x) = 1 + (q^k - q) x^{(q-1)q^{k-2}} + (q-1) x^{q^{k-1}}\text{.}
\]
So it is a $\Delta$-divisible code with $\Delta = q^{k-2}$.
The code $\RM_q(k)$ is full-length and projective.
Using Lemma~\ref{lem:indecomposable_gm_characterization} it is checked to be indecomposable for $(q,k) \neq (2,2)$.%
\footnote{In the border case $(q,k) = (2,2)$, the first order Reed-Muller code is just the full Hamming space $\mathbb{F}_2^2$, which is decomposable.}

The first order Reed-Muller codes will only be needed in the binary case $q=2$.
Here, all but a single non-zero word are of weight $\Delta$, so $\RM_2(k)$ is spanned by codewords of weight $\Delta$.

\item
The $q$-ary parity check code $\PC_q(k)$ of dimension $k \geq 1$.
It is the set of all vectors $c\in \mathbb{F}_q^{k+1}$ with $c_1 + \ldots + c_{k+1} = 0$.
A generator matrix can be constructed column-wise by taking the $k$ standard basis vectors in $\mathbb{F}_q^k$ together with the negated all-one vector.
Geometrically, $\PC_q(k)$ corresponds to a projective basis of a projective space of algebraic dimension $k$, sometimes also called projective frame.
Its parameters are $[k+1,k]_q$.
It is the dual code of the $(k+1)$-fold $q$-ary repetition code, so the polynomial form of the MacWilliams identities gives the homogeneous weight enumerator as
\[
	W_{\PC_q(k)}(x,y) = \frac{1}{q}\left((x+(q-1)y)^{k+1} + (q-1)(x-y)^{k+1}\right)\text{.}
\]
The code $\PC_q(k)$ is full-length and indecomposable by Lemma~\ref{lem:indecomposable_gm_characterization}.
Moreover, $\PC_q(k)$ is projective if and only if $k \neq 1$.

The parity check code will only be needed in the binary case $q=2$.
Here, $\PC_2(k)$ is just the set of all vectors in $\mathbb{F}_2^{n+1}$ of even weight.
Thus it is a $\Delta$-divisible code with $\Delta = 2$, and the above stated generator matrix shows that $\PC_2(k)$ is spanned by codewords of weight $\Delta$.
\end{enumerate}

\begin{Remark}
\label{remark:isomorphisms_list}
There are the following isomorphisms among these series of codes.
\[
	\PC_q(1) \cong 2\cdot\Sim_q(1)\text{,}\quad
	\PC_2(2) \cong \Sim_2(2)\quad{,}\quad
	\PC_2(3) \cong \RM_2(3)\quad\text{and}\quad
	\PC_3(2) \cong \RM_3(2)\text{.}
\]
Up to taking repetitions on both sides, the above stated properties show that there are no other isomorphisms among possibly repeated codes of these series.
\end{Remark}

We will need the following three lemmas on divisible codes.
The first one is known as the Theorem of Bonisoli \cite{bonisoli1983every}, which says that every constant-weight code is the repetition of some simplex code, possibly extended by zero positions.
We state the theorem it in slightly more refined way.

\begin{Lemma}\label{lemma:bonisoli}
  Let $q$ be a prime power and $\Delta$ a positive integer.
  Let $a$ be the largest integer such that $q^a$ divides $\Delta$.

  Let $C$ be a $q$-ary linear code of dimension $k \geq 1$ of constant weight $\Delta$.
  Then $k \leq a+1$ and $C$ is isomorphic to the $\frac{\Delta}{q^{k-1}}$-fold repetition of the $q$-ary simplex code $\Sim_q(k)$, possibly extended by zero positions.
\end{Lemma}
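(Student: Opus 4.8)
The plan is to reduce to the full-length case and then analyze the columns of a generator matrix geometrically. Since zero positions are accounted for by the phrase ``possibly extended by zero positions'', I may assume that $C$ is full-length, so that a generator matrix $G$ of $C$ has no zero column. Writing $g_1,\dots,g_n\in\mathbb{F}_q^k\setminus\{0\}$ for the columns of $G$, the codeword associated with a row vector $x\in\mathbb{F}_q^k$ is $xG$, whose weight equals the number of indices $i$ with $x^{\top}g_i\neq 0$. As $x$ ranges over the nonzero vectors, $xG$ ranges injectively (by full row rank of $G$) over all nonzero codewords, each of weight $\Delta$. Hence for every nonzero $x$ the hyperplane $H_x=\{v\in\mathbb{F}_q^k\mid x^{\top}v=0\}$ contains exactly $n-\Delta$ of the columns, counted with multiplicity. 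Since every linear hyperplane arises as some $H_x$, the multiset of columns meets every hyperplane of $\mathbb{F}_q^k$ in the constant number $n-\Delta$.

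The heart of the argument is to upgrade this into the statement that, projectively, every point occurs with the same multiplicity. Encoding the columns by a multiplicity function $\mu$ on the point set of $\mathrm{PG}(k-1,q)$ with $\sum_P\mu(P)=n$, the hyperplane condition reads $\sum_{P\in H}\mu(P)=n-\Delta$ for every hyperplane $H$. Fix a point $P$ and sum this identity over all hyperplanes through $P$. Using that a point lies on $\frac{q^{k-1}-1}{q-1}$ hyperplanes and that two distinct points lie on $\frac{q^{k-2}-1}{q-1}$ common hyperplanes, a double count of $\sum_{H\ni P}\sum_{R\in H}\mu(R)=\mu(P)\bigl(\tfrac{q^{k-1}-1}{q-1}-\tfrac{q^{k-2}-1}{q-1}\bigr)+n\cdot\tfrac{q^{k-2}-1}{q-1}$ yields a linear equation for $\mu(P)$. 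The coefficient of $\mu(P)$ simplifies to $\frac{q^{k-1}-1}{q-1}-\frac{q^{k-2}-1}{q-1}=q^{k-2}$, so $\mu(P)$ is determined by $n$, $\Delta$, $q$, and $k$ alone and is therefore independent of $P$. This double-counting step is where the real content lies; everything before it is translation and everything after it is bookkeeping.

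With $\mu$ constant, say $\mu\equiv m$, the multiset of columns consists of exactly $m$ representatives of each point of $\mathrm{PG}(k-1,q)$. Up to the semimonomial isometries that define code isomorphism, I may rescale each column to its chosen projective representative, and after permuting columns this exhibits $G$ as a generator matrix of the $m$-fold repetition $m\cdot\Sim_q(k)$. It remains to pin down $m$ and the bound on $k$. Since $\Sim_q(k)$ has constant weight $q^{k-1}$, the repetition $m\cdot\Sim_q(k)$ has constant weight $mq^{k-1}$, which must equal $\Delta$; hence $m=\Delta/q^{k-1}$. Because $m$ is a positive integer (and $m\ge 1$ as $n\ge\Delta\ge 1$), $q^{k-1}$ divides $\Delta$, and by the maximality of $a$ this forces $k-1\le a$, i.e.\ $k\le a+1$. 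Re-inserting the removed zero positions gives the statement for general $C$, completing the plan.
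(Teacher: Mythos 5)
The paper offers no proof of this lemma to compare against: it is quoted as the Theorem of Bonisoli with a citation to \cite{bonisoli1983every}, and the authors only restate it in a refined form. Your argument is therefore necessarily a different route, and it is a correct, self-contained one --- in fact it is the standard geometric proof of Bonisoli's theorem. The chain full-length constant-weight code $\rightarrow$ multiset of points in $\mathrm{PG}(k-1,q)$ meeting every hyperplane in exactly $n-\Delta$ points (with multiplicity) $\rightarrow$ constant point multiplicity $m$ via the point--hyperplane double count $\rightarrow$ $C \cong m\cdot\Sim_q(k)$ is sound, and the bookkeeping at the end is right: $\Delta = mq^{k-1}$, so integrality of $m$ forces $q^{k-1}\mid\Delta$ and hence $k\le a+1$. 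What your approach buys, compared to simply invoking the literature as the paper does, is a short, purely combinatorial derivation whose only nontrivial input is the count of hyperplanes through one point and through two distinct points of $\mathrm{PG}(k-1,q)$; this fits naturally with the paper's geometric viewpoint on $\Sim_q(k)$ as the full point set of a projective space.

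One minor point you should patch: the double count divides by the coefficient $q^{k-2}$ and uses the number $\tfrac{q^{k-2}-1}{q-1}$ of hyperplanes through two \emph{distinct} points, so as written it presupposes $k\ge 2$. The case $k=1$ should be disposed of separately (it is trivial: after deleting zero positions, the generator is a single vector with all entries nonzero, which after rescaling coordinates is the $\Delta$-fold repetition of $\Sim_q(1)$). This is a cosmetic omission, not a gap in the main argument.
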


The second lemma shows that it is enough to consider $\mathbb{F}_q$-linear $\Delta$-divisible codes where $\Delta$ is a power of the base prime.
It is essentially \cite[Th.~1]{Ward-1981-ArchMath36[6]:485-494}, see also \cite[Th.~2]{Ward-1999-DCC17[1-3]:73-79}.

\begin{Lemma}\label{lemma:delta_general}
	Let $q = p^r$ be a prime power with $p$ prime.
	Let $\Delta$ be a positive integer and $a$ the largest integer with $p^a\mid \Delta$.
	Let $C$ be a $\mathbb{F}_q$-linear $\Delta$-divisible code.
	Then $C$ is isomorphic to the $\frac{\Delta}{p^a}$-fold repetition of a $p^a$-divisible $\mathbb{F}_q$-linear code, possibly extended by zero positions.
\end{Lemma}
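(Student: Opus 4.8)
The plan is to reduce the statement to a divisibility condition on the column multiplicities of a generator matrix and to extract that condition by a double-counting argument in projective space. Write $\Delta = p^a m$ with $\gcd(m,p)=1$; since $q=p^r$, the integer $m$ is coprime to $q$. Because adding or deleting zero positions changes neither the weights nor the asserted conclusion, I would first assume that $C$ is full-length of some dimension $k$. Choosing for each column a fixed projective representative (which is permitted by semimonomial equivalence), $C$ is described up to isomorphism by a multiplicity function $\lambda\colon \mathrm{PG}(k-1,q)\to\mathbb{Z}_{\ge 0}$, where $\lambda_P$ counts how often the point $P$ occurs as a column, so that $n=\sum_P\lambda_P$. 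For a nonzero functional $x\in\mathbb{F}_q^k$ with kernel hyperplane $H$, the codeword $x^{T}G$ has weight $n-a_H$, where $a_H=\sum_{P\in H}\lambda_P$ is the number of columns lying on $H$. Hence the $\Delta$-divisibility of $C$ gives $a_H\equiv n\pmod{m}$ for every hyperplane $H$, and the goal becomes to show that $m\mid\lambda_P$ for all $P$; the $p^a$-divisibility of the resulting code will then be immediate.

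For $k\ge 2$ I would use two incidence counts in $\mathrm{PG}(k-1,q)$: a point lies on $N_{k-1}=\frac{q^{k-1}-1}{q-1}$ hyperplanes, a pair of distinct points lies on $N_{k-2}=\frac{q^{k-2}-1}{q-1}$ common hyperplanes, and there are $N_{k}=\frac{q^{k}-1}{q-1}$ hyperplanes in total. Summing $a_H\equiv n\pmod m$ over all hyperplanes yields $nN_{k-1}\equiv nN_k\pmod m$, hence $nq^{k-1}\equiv 0\pmod m$; since $q^{k-1}$ is invertible modulo $m$, this forces $n\equiv 0\pmod m$. Summing instead over the hyperplanes through a fixed point $P$, and sorting the contributions according to whether a column sits at $P$ or not, gives
\[
	\lambda_P N_{k-1} + (n-\lambda_P)N_{k-2}\equiv nN_{k-1}\pmod m,
\]
which simplifies to $(\lambda_P-n)q^{k-2}\equiv 0\pmod m$ and hence, again inverting a power of $q$, to $\lambda_P\equiv n\pmod m$. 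Combining the two congruences yields $\lambda_P\equiv 0\pmod m$ for every point $P$.

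With $m\mid\lambda_P$ for all $P$, the multiset of columns of $C$ consists of $m$ copies of the multiset with multiplicities $\lambda_P/m$; letting $C'$ be the $\mathbb{F}_q$-linear code defined by the latter, one obtains $C\cong m\cdot C'$. Each weight of $C'$ equals $\tfrac1m$ times a weight of $C$, hence is divisible by $\Delta/m=p^a$, so $C'$ is $p^a$-divisible, as required. The degenerate cases are handled directly: the zero code $(k=0)$ is trivial, and for $k=1$ the code is a repetition code of length $n$ with $\Delta\mid n$, giving $C=m\cdot C''$ with $C''$ the $p^a$-divisible repetition code of length $n/m$. I expect the only genuine subtlety to be the interplay of the two congruences: neither $n\equiv 0$ nor $\lambda_P\equiv n$ suffices on its own, and both derivations rely on $m$ being coprime to $q$ so that the factors $q^{k-1}$ and $q^{k-2}$ can be cancelled modulo $m$ — this is precisely where the maximality of $a$, equivalently $\gcd(m,p)=1$, is used. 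One should also note that this double counting is merely the geometric form of the low-order MacWilliams identities, so an alternative write-up could dispense with the projective language entirely.
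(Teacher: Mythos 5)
Your proof is correct, and the comparison here is somewhat special: the paper does not prove this lemma at all, but imports it as being ``essentially \cite[Th.~1]{Ward-1981-ArchMath36[6]:485-494}, see also \cite[Th.~2]{Ward-1999-DCC17[1-3]:73-79}.'' So your argument is not a variant of an in-paper proof but a self-contained replacement for an external citation. The substance checks out: after discarding zero positions and rescaling columns (legitimate under the paper's semimonomial notion of isomorphism), a full-length $k$-dimensional code is exactly a multiplicity function $\lambda$ on $\mathrm{PG}(k-1,q)$, the weight of the codeword attached to a hyperplane $H$ is $n-a_H$, and $\Delta$-divisibility gives $a_H\equiv n\pmod m$ for all $H$, where $\Delta=p^am$. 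Your two incidence counts are exact: $\sum_H a_H=nN_{k-1}$ and $\sum_{H\ni P}a_H=\lambda_PN_{k-1}+(n-\lambda_P)N_{k-2}$, and with $N_k-N_{k-1}=q^{k-1}$, $N_{k-1}-N_{k-2}=q^{k-2}$ and $\gcd(q,m)=1$ they yield $n\equiv 0$ and $\lambda_P\equiv n\pmod m$, hence $m\mid\lambda_P$; grouping the columns into $m$ identical blocks then gives $C\cong m\cdot C'$ up to a coordinate permutation, and the $p^a$-divisibility of $C'$ follows by dividing all weights by $m$. The degenerate dimensions are handled correctly (for $k=2$ your formulas remain valid with $N_0=0$, and $k\le 1$ is done by hand), and you correctly isolate where the maximality of $a$ enters, namely in cancelling the powers of $q$ modulo $m$. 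What the two approaches buy: the paper's citation keeps its Section~2 short and leans on Ward's machinery, whereas your argument makes the lemma elementary and transparent at the cost of a page of counting; as you note, the counting is just the geometric incarnation of the low-order MacWilliams/Pless identities, so it also connects naturally to the tools the paper already uses elsewhere (e.g.\ in Lemma~\ref{lemma:enlarge_reed_muller} and Lemma~\ref{lemma:4_delta_subcode}).
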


The third lemma is the result \cite[Lemma~13]{Ward-1998-JCTSA83[1]:79-93} on the divisibility of residual codes, see also \cite[Lemma~7]{honold2018partial}.

\begin{Lemma}
	\label{lemma:residual}
	Let $q$ be a prime power and $\Delta$ a positive integer.
	Let $C$ be a $q$-ary $\Delta$-divisible code.
	Then any residual of $C$ is $\frac{\Delta}{\gcd(q,\Delta)}$-divisible.
\end{Lemma}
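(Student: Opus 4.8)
The plan is to fix a single codeword $c\in C$ and establish the divisibility directly for \emph{every} codeword of the residual $C_{N\setminus\supp(c)}$, by an averaging argument over the set $\{d+\lambda c\mid\lambda\in\mathbb{F}_q\}$. Write $S=\supp(c)$ and let $d\in C$ be arbitrary, so that $d_{N\setminus S}$ is a generic codeword of the residual. First I would split the weight of any codeword of $C$ into its contribution on $S$ and its contribution on $N\setminus S$. Since $c$ vanishes outside $S$, every codeword $d+\lambda c$ with $\lambda\in\mathbb{F}_q$ agrees with $d$ on $N\setminus S$; hence its weight there equals $\wt(d_{N\setminus S})$, independently of $\lambda$.

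The key step is to sum the weights of the $q$ codewords $d+\lambda c$, $\lambda\in\mathbb{F}_q$, each of which is divisible by $\Delta$ by hypothesis. On $N\setminus S$ this sum contributes $q\cdot\wt(d_{N\setminus S})$. On $S$, for each position $i\in S$ we have $c_i\neq 0$, so as $\lambda$ ranges over $\mathbb{F}_q$ the value $d_i+\lambda c_i$ runs through all of $\mathbb{F}_q$ exactly once; it is therefore non-zero for precisely $q-1$ of the $q$ choices of $\lambda$. Summing over $i\in S$ shows that the total contribution on $S$ equals $(q-1)\wt(c)$. Consequently
\[
	\Delta \;\Big|\; q\cdot\wt(d_{N\setminus S}) + (q-1)\wt(c)\text{.}
\]

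Since $c\in C$, its weight $\wt(c)$ is divisible by $\Delta$, so $(q-1)\wt(c)$ is as well, and we deduce $\Delta\mid q\cdot\wt(d_{N\setminus S})$. The final step is purely number-theoretic: writing $g=\gcd(q,\Delta)$, the integers $q/g$ and $\Delta/g$ are coprime, and $\frac{\Delta}{g}\mid\frac{q}{g}\wt(d_{N\setminus S})$ forces $\frac{\Delta}{g}\mid\wt(d_{N\setminus S})$. As $d$ was arbitrary, every codeword of $C_{N\setminus S}$ has weight divisible by $\frac{\Delta}{\gcd(q,\Delta)}$, which is the assertion.

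I do not expect a serious obstacle in this argument; it hinges entirely on the single observation that summing the $\Delta$-divisible weights over $\{d+\lambda c\}$ isolates the residual weight with a factor of $q$, while collapsing the contribution on $S$ to the quantity $(q-1)\wt(c)$, which is already divisible by $\Delta$. The only points needing a little care are the counting on $S$, where one uses that $c_i\neq 0$ exactly for $i\in S$, and the coprimality bookkeeping in the final step.
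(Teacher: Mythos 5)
Your argument is correct in every step: the identity $\sum_{\lambda\in\mathbb{F}_q}\wt(d+\lambda c)=q\cdot\wt(d_{N\setminus\supp(c)})+(q-1)\wt(c)$, the $\Delta$-divisibility of each summand and of $\wt(c)$, and the final coprimality argument with $g=\gcd(q,\Delta)$ are all sound, and every codeword of the residual is indeed of the form $d_{N\setminus\supp(c)}$. Note that the paper does not prove this lemma itself but quotes it from \cite[Lemma~13]{ward1998divisibility} and \cite[Lemma~7]{honold2018partial}; your averaging over the coset $\{d+\lambda c\mid\lambda\in\mathbb{F}_q\}$ is precisely the standard proof given in those references, so you have in effect reconstructed the intended argument.
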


\section{The characterization}
\label{sec_main_result}
In this section, we will prove Theorem~\ref{thm:main}.

\begin{Lemma}
	\label{lem:supp_intersection}
	Let $C$ be a $\Delta$-divisible $q$-ary linear code and $c,c'\in C$ two codewords of weight $\Delta$.
	Then one of the following statements is true:
	\begin{enumerate}[(i)]
		\item\label{lem:supp_intersection:sim} $\supp c = \supp c'$ and $c \sim c'$.
		\item\label{lem:supp_intersection:proper} $\#(\supp c \cap \supp c') = \frac{q-1}{q}\Delta$ and $\#\{i\in\supp c\cap \supp c' \mid c'_i = \lambda c_i\} = \frac{1}{q} \Delta$ for all $\lambda\in\mathbb{F}_q^\ast$.
		\item\label{lem:supp_intersection:disjoint} $\supp c \cap \supp c' = \emptyset$.
	\end{enumerate}
\end{Lemma}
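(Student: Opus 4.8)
The plan is to extract everything from the $\Delta$-divisibility of the $q$ linear combinations $c'-\lambda c$, $\lambda\in\mathbb{F}_q^\ast$. First I would introduce the two counting quantities that govern the trichotomy. Write $s=\#(\supp c\cap\supp c')$ and, for each $\lambda\in\mathbb{F}_q^\ast$, let $n_\lambda=\#\{i\in\supp c\cap\supp c'\mid c'_i=\lambda c_i\}$ be the number of common-support positions on which the ratio $c'_i/c_i$ equals $\lambda$. Since every position of the intersection has exactly one such ratio, these numbers partition the intersection, so $\sum_{\lambda\in\mathbb{F}_q^\ast}n_\lambda=s$.

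Next I would compute $\wt(c'-\lambda c)$ for $\lambda\in\mathbb{F}_q^\ast$ by splitting the $n$ positions into the four blocks determined by whether each of $c_i,c'_i$ vanishes. On $\supp c\setminus\supp c'$ and on $\supp c'\setminus\supp c$ the combination is nonzero (each block has size $\Delta-s$, using $\wt(c)=\wt(c')=\Delta$), on $\supp c\cap\supp c'$ it vanishes exactly on the $n_\lambda$ positions of ratio $\lambda$, and off $\supp c\cup\supp c'$ it is zero. This yields $\wt(c'-\lambda c)=2(\Delta-s)+(s-n_\lambda)=2\Delta-s-n_\lambda$. As $C$ is $\Delta$-divisible, $s+n_\lambda$ must be a multiple of $\Delta$, and since $0\le n_\lambda\le s\le\Delta$ we obtain $s+n_\lambda\in\{0,\Delta,2\Delta\}$ for every $\lambda$.

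The conclusion then follows from a short case distinction on $s$. If $s=0$, the supports are disjoint and we are in case~(iii). If $0<s<\Delta$, then $0<s+n_\lambda<2\Delta$ forces $s+n_\lambda=\Delta$, i.e.\ $n_\lambda=\Delta-s$ for all $\lambda$; summing the $q-1$ equal terms gives $(q-1)(\Delta-s)=s$, hence $s=\frac{q-1}{q}\Delta$ and $n_\lambda=\frac{1}{q}\Delta$, which is precisely case~(ii). If $s=\Delta$, then $\supp c=\supp c'$ and the weight formula reduces to $\wt(c'-\lambda c)=\Delta-n_\lambda$, so divisibility forces each $n_\lambda\in\{0,\Delta\}$; as the $n_\lambda$ sum to $\Delta$, exactly one of them equals $\Delta$ and the rest vanish, meaning $c'=\lambda c$ for that $\lambda$, i.e.\ case~(i).

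I expect no serious obstacle: once the weight formula $\wt(c'-\lambda c)=2\Delta-s-n_\lambda$ is in place, the rest is elementary counting. The only point requiring a little care is verifying that the three blocks of the case distinction are genuinely exhaustive and that in case~(ii) the stated values are \emph{forced} rather than merely admissible; in particular, when $q\nmid\Delta$ the non-integrality of $\frac{q-1}{q}\Delta$ automatically rules out the middle case, leaving only~(i) and~(iii).
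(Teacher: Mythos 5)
Your proof is correct and follows essentially the same route as the paper's: both introduce the intersection size and the ratio counts $n_\lambda$, derive the weight formula $\wt(c'-\lambda c)=2\Delta-s-n_\lambda$, and let $\Delta$-divisibility force the trichotomy. The only difference is cosmetic (you case-split on $s\in\{0\}$, $0<s<\Delta$, $s=\Delta$ rather than on the support relations directly), so there is nothing to add.
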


\begin{proof}
	Let $b = \#(\supp c \cap \supp c')$ and $a_\lambda = \#\{i\in\supp c\cap \supp c' \mid c_i = \lambda c'_i\}$ for $\lambda\in\mathbb{F}_q^\ast$.
	Then
	\begin{equation}
		\label{eq:supp_intersection:sum_a_lambda}
		\sum_{\lambda\in\mathbb{F}_q^\ast} a_\lambda = b\text{.}
	\end{equation}
	Let $\lambda\in\mathbb{F}_q^\ast$.
	We have
	\begin{align*}
		\wt(c - \lambda c')
		& = \#(\supp c \setminus \supp c') + \#(\supp c' \setminus \supp c) + \#\{i\in \supp(c) \cap \supp(c') \mid c_i \neq \lambda c'_i\} \\
		& = (\wt(c) - b) + (\wt(c') - b) + (b - a_\lambda) \\
		& = 2\Delta - b - a_\lambda\text{.}
	\end{align*}

	If $\supp c = \supp c'$, we have $\wt(c - \lambda c') = \Delta - a_\lambda$.
	By the $\Delta$-divisibility, $a_\lambda\in\{0,\Delta\}$, and Equation~\eqref{eq:supp_intersection:sum_a_lambda} shows that $a_\lambda = \Delta$ for a unique $\lambda\in\mathbb{F}_q^\ast$.
	Therefore $c = \lambda c'$ for this value of $\lambda$, which is case~\ref{lem:supp_intersection:sim}.

	Assuming that we are not in case~\ref{lem:supp_intersection:sim} or~\ref{lem:supp_intersection:disjoint}, the above expression for $\wt(c - \lambda c')$ is neither $0$ nor $2\Delta$.
	By the $\Delta$-divisibility of $C$, the only remaining possibility is $\wt(c' - \lambda c) = \Delta$.
	So $a\coloneqq a_\lambda = \Delta - b$ independently of $\lambda\in\mathbb{F}_q^\ast$.
	Now Equation~\eqref{eq:supp_intersection:sum_a_lambda} yields $b = \frac{q-1}{q}\Delta$ and $a = \frac{1}{q} \Delta$, showing that we are in case~\ref{lem:supp_intersection:proper}.
\end{proof}

For the inductive treatment of indecomposable codes, we prepare the following lemma.

\begin{Lemma}\label{lem:indecomposable_chain}
	Let $C$ be an indecomposable linear code of dimension $k$ and $B$ a basis of $C$ consisting of codewords of minimum weight.
	Then there exists a chain $\emptyset = B_0 \subseteq B_1 \subseteq \cdots \subseteq B_k = B$ with $\#B_i = i$, such that the subcodes $C_i = \langle B_i\rangle$ of $C$ are indecomposable.
\end{Lemma}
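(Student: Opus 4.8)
The plan is to reduce the statement to a purely graph-theoretic fact by attaching to the basis $B = \{b_1,\dots,b_k\}$ a \emph{support-intersection graph} $H$, and then to exploit that a minimum-weight codeword cannot be split across a direct-sum decomposition. Write $w$ for the minimum weight of $C$, so that $\wt(b_j) = w$ for every $j$, and let $H$ be the graph on vertex set $B$ in which two basis vectors are adjacent exactly when their supports meet. The technical core will be the following claim: for every subset $B'\subseteq B$, the subcode $\langle B'\rangle$ is indecomposable if and only if the induced subgraph $H[B']$ is connected.

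To prove this claim I would argue both directions using the characterization of decomposability given in the preliminaries (nonzero subcodes $D,D'$ with $D+D'=\langle B'\rangle$ and $\supp(D)\cap\supp(D')=\emptyset$). For the easy direction, if $H[B']$ is disconnected I split it into one connected component $P$ and the remaining vertices $Q$; since $\supp(\langle P\rangle)=\bigcup_{b\in P}\supp(b)$ and likewise for $Q$, and there are no edges between $P$ and $Q$, the subcodes $\langle P\rangle$ and $\langle Q\rangle$ have disjoint supports, so $\langle B'\rangle$ is decomposable. For the converse, suppose $\langle B'\rangle = D + D'$ with $D,D'$ nonzero and disjoint supports. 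Each $b\in B'$ is a minimum-weight codeword of $\langle B'\rangle$, because $\langle B'\rangle\subseteq C$ forces every nonzero word of $\langle B'\rangle$ to have weight $\ge w$, while $\wt(b)=w$. Writing $b=d+d'$ with $d\in D$, $d'\in D'$ and disjoint supports gives $\wt(b)=\wt(d)+\wt(d')$; if both $d,d'$ were nonzero, each would be a nonzero codeword of weight $\ge w$, forcing $\wt(b)\ge 2w$, a contradiction. Hence every $b\in B'$ lies entirely in $D$ or entirely in $D'$, so $B'$ partitions accordingly. A dimension count (using $D\cap D'=\{0\}$ and $\dim\langle B'\rangle=\#B'$) shows both parts are nonempty and span $D$ and $D'$, and disjointness of supports means no edges run between the two parts; thus $H[B']$ is disconnected.

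With the claim established, the rest is elementary. Since $C=\langle B\rangle$ is indecomposable, $H$ is connected. It then suffices to order the vertices so that every initial segment induces a connected subgraph: start with any $b_1$, and having built a set $B_i$ with $H[B_i]$ connected and $i<k$, pick $b_{i+1}$ to be any vertex outside $B_i$ adjacent to $B_i$, which exists by connectivity of $H$. This produces the desired chain $\emptyset=B_0\subseteq\cdots\subseteq B_k=B$ with $\#B_i=i$ and each $H[B_i]$ connected, whence the claim applied to each $B_i$ yields that every $C_i=\langle B_i\rangle$ is indecomposable.

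The main obstacle is the converse direction of the claim, namely verifying that a minimum-weight basis vector stays intact on one side of a direct-sum decomposition and that the resulting partition of the basis is genuinely two-sided; this is exactly where the hypothesis that $B$ consists of minimum-weight codewords is indispensable. The graph-ordering step and the easy direction of the claim are routine by comparison.
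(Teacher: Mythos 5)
Your proof is correct and takes essentially the same route as the paper: the paper likewise works with the support-intersection graph on the minimum-weight basis, uses the observation that a minimum-weight codeword $c=e+e'$ with $\supp(e)\cap\supp(e')=\emptyset$ must lie wholly in one summand, and propagates this along the connected graph; its greedy construction of the chain (always adjoining a basis vector whose support meets the current subcode, which exists by indecomposability of $C$) is exactly your graph-ordering step. The only difference is organizational --- you isolate the equivalence ``$\langle B'\rangle$ indecomposable if and only if $H[B']$ connected'' as a standalone claim, whereas the paper interleaves the chain construction with the inductive proof of indecomposability.
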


\begin{proof}
	We define the sets $B_i$ (and the codes $C_i = \langle B_i\rangle$) iteratively by $B_0 = \emptyset$ and $B_i = B_{i-1} \cup\{c_i\}$ for $i\in\{1,\ldots,k\}$, where $c_i\in B\setminus B_{i-1}$ is a codeword with $\supp(c_i) \cap \supp(C_{i-1}) \neq \emptyset$.
	Suitable codewords $c_i$ do indeed exist by the indecomposability of $C$, as otherwise $C = C_{i-1} \oplus \langle B\setminus B_{i-1}\rangle$ would be a non-trivial decomposition of $C$ for $i\neq 1$.
	We consider the simple graph $G_i$ on the vertex set $B_i$ where two distinct codewords are connected by an edge if their supports are not disjoint.
	By induction, all graphs $G_i$ are connected.

	Let $E$ and $E'$ be subcodes of $C_i$ with $E+E' = C_i$ and $\supp(E) \cap \supp(E') = \emptyset$.
	For any codeword $c\in C_i$ there are codewords $e\in E$ and $e'\in E'$ with $c = e + e'$.
	By $\supp(e) \cap \supp(e') = \emptyset$, we have $\wt(c) = \wt(e) + \wt(e')$.
	If $c$ is of minimum weight, then $e = \boldsymbol{0}$ or $e' = \boldsymbol{0}$, so without restriction $c\in E$.
	If $c'\in C_i$ is another codeword of minimum weight with $\supp(c) \cap \supp(c') \neq \emptyset$, then necessarily $c'\in E$, too.
	The connectedness of the graph $G_i$ shows that without restriction, $b\in E$ for all $b\in B_i$.
	Therefore, $C_i = \langle B_i\rangle = E$ and $E' = \boldsymbol{0}$, implying that $C_i$ is indecomposable.
\end{proof}

We remark that the above proof still works as long as $B$ consists only of codewords of weight smaller than $2d(C)$.

\subsection{Non-binary codes}

We start with the easier case of non-binary linear codes.

\begin{Theorem}\label{thm:non_binary}
	Let $q \neq 2$ be a prime power and $\Delta$ a positive integer.
	Let $a$ be the largest integer such that $q^a$ divides $\Delta$.

	There are $a+1$ isomorphism types of indecomposable non-zero full-length $\Delta$-divisible $q$-ary linear codes which are spanned by codewords of weight $\Delta$, one for each dimension $k\in\{1,\ldots,a+1\}$.
	It is given by the $\frac{\Delta}{q^{k-1}}$-fold repetition of the $q$-ary simplex code of dimension $k$.
\end{Theorem}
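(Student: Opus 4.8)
The plan is to prove the statement by induction on the dimension $k$, using Lemma~\ref{lem:indecomposable_chain} to build up the code one minimum-weight generator at a time while maintaining indecomposability. First I would observe that a $\Delta$-divisible code spanned by codewords of weight $\Delta$ has minimum weight exactly $\Delta$ (provided it is non-zero), so the basis $B$ from Lemma~\ref{lem:indecomposable_chain} consists of weight-$\Delta$ codewords and gives a chain of indecomposable subcodes $C_1 \subseteq C_2 \subseteq \cdots \subseteq C_k = C$, each spanned by weight-$\Delta$ codewords. The base case $k=1$ is immediate: an indecomposable full-length code of dimension $1$ generated by a single weight-$\Delta$ word is the $\Delta$-fold repetition of $\Sim_q(1)$.

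For the inductive step, I would analyze how a new generator $c_k$ of weight $\Delta$ attaches to $C_{k-1}$, which by induction is the $\frac{\Delta}{q^{k-2}}$-fold repetition of $\Sim_q(k-1)$. The key structural tool is Lemma~\ref{lem:supp_intersection}: any two weight-$\Delta$ codewords either share the same support (and are projectively equal), meet in exactly $\frac{q-1}{q}\Delta$ positions in the balanced way described in case~(ii), or have disjoint support. Since indecomposability forces $\supp(c_k)\cap\supp(C_{k-1})\neq\emptyset$, the disjoint case is excluded for the interaction with the spanning words of $C_{k-1}$. I would then argue that the balanced-intersection condition, applied to $c_k$ against the $\frac{q^{k-1}-1}{q-1}$ distinct directions of weight-$\Delta$ codewords coming from the simplex structure of $C_{k-1}$, forces $\langle C_{k-1}, c_k\rangle$ to be precisely the $\frac{\Delta}{q^{k-1}}$-fold repetition of $\Sim_q(k)$. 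Concretely, the restriction of $C$ to $\supp(C_{k-1})$ must agree with $C_{k-1}$ up to the action of $c_k$, and the new positions in $\supp(c_k)\setminus\supp(C_{k-1})$ must organize into the extra coordinates that promote a rank-$(k-1)$ simplex to a rank-$k$ simplex. This is where the bound $k\leq a+1$ also emerges, exactly as in Lemma~\ref{lemma:bonisoli}: the repetition factor $\frac{\Delta}{q^{k-1}}$ must be a positive integer, so $q^{k-1}\mid\Delta$, forcing $k-1\leq a$.

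The hard part will be the combinatorial bookkeeping in the inductive step: showing that the balanced intersection pattern of case~(ii), imposed simultaneously against all the coset directions of $C_{k-1}$, rigidly determines the new code rather than merely constraining it. I expect to need the hypothesis $q\neq 2$ crucially here --- the balanced splitting $\#\{i : c'_i = \lambda c_i\} = \frac{1}{q}\Delta$ for \emph{all} $\lambda\in\mathbb{F}_q^\ast$ gives genuinely more rigidity when $q>2$ (there are $q-1\geq 2$ scalar classes to balance), which is what rules out the Reed-Muller and parity-check exceptions that appear only in the binary case. A clean way to close the argument is to show that the column multiset of a generator matrix of $C$, restricted to its effective length, consists of each projective point of $\mathrm{PG}(k-1,q)$ with equal multiplicity $\frac{\Delta}{q^{k-1}}$; the constant-weight-per-direction data from Lemma~\ref{lem:supp_intersection} together with a counting argument via the first two Pless-type moments should pin down this multiplicity. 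Once the structure of $C$ as a repeated simplex code is established, full-length-ness removes the zero positions and uniqueness follows since the dimension $k$ determines the isomorphism type.
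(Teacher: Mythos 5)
Your setup (induction on dimension via Lemma~\ref{lem:indecomposable_chain}, base case $k=1$, exclusion of the disjoint-support case of Lemma~\ref{lem:supp_intersection} by indecomposability) matches the paper exactly, but the decisive step of the inductive argument is precisely the part you defer as ``the hard part,'' and it is never carried out. Your plan --- showing that the balanced intersections of $c_k$ with all weight-$\Delta$ directions of $C_{k-1}$ rigidly force the column multiset of $\langle C_{k-1},c_k\rangle$ to be a uniform multiple of $\mathrm{PG}(k-1,q)$ --- amounts to reproving a case of Bonisoli's theorem by hand, and the tools you name for it ($q$-ary Pless-type moments, simultaneous intersection bookkeeping) are only gestured at, not developed. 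As written, there is a genuine gap at the heart of the proof.

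The paper closes the induction with a much shorter argument for which you had all the ingredients in hand: from $C_{k-1}\cong_0 \frac{\Delta}{q^{k-2}}\cdot\Sim_q(k-1)$ one gets
\[
	\#\supp(C_{k-1}) \;=\; \frac{\Delta}{q^{k-2}}\cdot\frac{q^{k-1}-1}{q-1} \;<\; \left(1+\frac{1}{q-1}\right)\Delta\text{,}
\]
and since Lemma~\ref{lem:supp_intersection} forces $c_k$ to overlap some weight-$\Delta$ word of $C_{k-1}$ in exactly $\frac{q-1}{q}\Delta$ positions, the new generator adds at most $\frac{\Delta}{q}$ fresh positions, so $n(C) < \left(1+\frac{1}{q-1}+\frac{1}{q}\right)\Delta < 2\Delta$ for $q\geq 3$. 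Hence every non-zero codeword has weight exactly $\Delta$, i.e.\ $C$ is a constant-weight code, and Lemma~\ref{lemma:bonisoli} immediately yields both $k\leq a+1$ and $C\cong \frac{\Delta}{q^{k-1}}\cdot\Sim_q(k)$; no structural rigidity argument is needed at all. This also corrects your account of where $q\neq 2$ enters: not through extra rigidity coming from the $q-1\geq 2$ scalar classes in case~(ii), but through the purely arithmetic inequality $1+\frac{1}{q-1}+\frac{1}{q}<2$, which fails for $q=2$ --- there the same computation only gives $n<\frac{5}{2}\Delta$, admitting codewords of weight $2\Delta$, and it is exactly this loophole that produces the Reed--Muller and parity-check families in the binary classification.
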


\begin{proof}
	By the discussion in Section~\ref{sec:introduction}, all stated codes are full-length, $\Delta$-divisible, indecomposable and spanned by codewords of weight $\Delta$.
	Furthermore, it is clear that a full-length $\Delta$-divisible code of dimension $1$ is equivalent to the $\Delta$-fold repetition of the simplex code of dimension $1$.%
	\footnote{Note that the simplex code of length $1$ is just the full Hamming space $\mathbb{F}_q^1$.}
	Now let $C$ be an indecomposable full-length $\Delta$-divisible code of dimension $k \geq 2$ and $B$ a basis of $C$ consisting of codewords of weight $\Delta$.
	By Lemma~\ref{lem:indecomposable_chain}, there is a $c\in B$ such that the code $D \coloneqq \langle B\setminus\{c\}\rangle$ is an indecomposable subcode of $C$ of codimension $1$.
	As a subcode, $D$ is $\Delta$-divisible, too.
	So by induction, $D\cong_0 \frac{\Delta}{q^{k-2}}\cdot\Sim_q(k-1)$.
	Thus
	\[
		\#\supp(D) = \frac{\Delta}{q^{k-2}}\cdot\frac{q^{k-1} - 1}{q-1} < \frac{q}{q-1}\, \Delta = \left(1 + \frac{1}{q-1}\right)\Delta\text{.}
	\]
	By the indecomposability of $C$, $\supp(c) \cap \supp(D) \ne \emptyset$.
	Therefore, by Lemma~\ref{lem:supp_intersection} there is a $c'\in B\setminus\{c\}$ with $\#(\supp(c) \cap \supp(c')) = \frac{q-1}{q}\Delta$. 
	This implies that the length $n$ of $C$ is bounded by
	\[
		n
		< \#\supp(D) + \frac{1}{q}\Delta
		< \left(1 + \frac{1}{q-1} + \frac{1}{q}\right)\Delta
		< 2\Delta\text{,}
	\]
	where we have used $q \geq 3$ in the last inequality.
	Now by the $\Delta$-divisibility, $C$ is a code of constant weight~$\Delta$.
	By Lemma~\ref{lemma:bonisoli}, $k \leq a+1$ and $C$ is equivalent to the code of the stated form.
\end{proof}

\subsection{Binary codes}

We are going to use the same inductive approach in the binary case, too.
For that purpose we prepare the following three lemmas, which investigate the extensions of repeated parity check codes, repeated simplex codes and repeated first order Reed-Muller codes.

\begin{Lemma}\label{lemma:enlarge_parity_check}
	Let $a$ be a positive integer and $\Delta = 2^a$.
	Let $C$ be a binary indecomposable full-length $\Delta$-divisible code of dimension $k\geq 2$ with a basis $B$ of codewords of weight $\Delta$.
	Let $c\in B$.
	If $\langle B\setminus\{c\}\rangle\cong_0 \frac{\Delta}{2}\cdot \PC_2(k-1)$, then
	\begin{enumerate}[(i)]
		\item $C \cong \frac{\Delta}{2} \cdot \PC_2(k)$ or
		\item $k = 3$ and $C\cong \frac{\Delta}{2}\cdot \Sim_2(3)$ or
		\item $k = 4$ and $C\cong \frac{\Delta}{2}\cdot\RM_2(4)$.
	\end{enumerate}
\end{Lemma}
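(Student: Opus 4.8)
The plan is to exploit the explicit combinatorial structure of the subcode $D \coloneqq \langle B\setminus\{c\}\rangle\cong_0\frac{\Delta}{2}\cdot\PC_2(k-1)$ and then to control the single additional generator $c$ via Lemma~\ref{lem:supp_intersection}. After deleting zero positions, $\supp(D)$ decomposes into $k$ pairwise disjoint blocks $P_1,\dots,P_k$, each of size $\frac{\Delta}{2}$, on which $D$ is constant, and the weight-$\Delta$ codewords of $D$ are exactly the block-pair indicators $d_{ij}=\mathbf{1}_{P_i\cup P_j}$ with $i\neq j$. Writing $s_i\coloneqq\#(\supp(c)\cap P_i)\in\{0,\dots,\frac{\Delta}{2}\}$ and $t\coloneqq\#(\supp(c)\setminus\supp(D))$, so that $\sum_i s_i + t=\Delta$, I would apply Lemma~\ref{lem:supp_intersection} to the pair $c,d_{ij}$. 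Since $c\neq d_{ij}$, this yields $s_i+s_j\in\{0,\frac{\Delta}{2},\Delta\}$, with the value $\Delta$ occurring only for $s_i=s_j=\frac{\Delta}{2}$. This produces a dichotomy for $(s_1,\dots,s_k)$: if some $s_i\in\{0,\frac{\Delta}{2}\}$, the constraint forces every $s_j\in\{0,\frac{\Delta}{2}\}$; otherwise all $s_i$ are strictly intermediate, and for $k\geq 3$ the pairwise sums $\frac{\Delta}{2}$ force $s_i=\frac{\Delta}{4}$ for all $i$ (in particular $4\mid\Delta$). Throughout, indecomposability gives $\supp(c)\cap\supp(D)\neq\emptyset$ (else $C=D\oplus\langle c\rangle$), and linear independence of $B$ gives $c\notin D$.

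In the first case, let $m$ be the number of full blocks; then $t=\Delta-m\frac{\Delta}{2}\geq 0$ forces $m\leq 2$, while $m=0$ is excluded by indecomposability and $m=2$ by $c\notin D$ (since $m=2$, $t=0$ would give $c=d_{ij}$). Hence $m=1$: the codeword $c$ fills one block entirely and has $\frac{\Delta}{2}$ further positions, all outside $\supp(D)$, forming a new block $P_{k+1}$. Now every generator of $C$ is constant on each of $P_1,\dots,P_{k+1}$, so $C$ is the $\frac{\Delta}{2}$-fold repetition of the pattern code $C'$ on $k+1$ positions. As $C'$ is generated by the even-weight code on $\{1,\dots,k\}$ together with one weight-$2$ vector reaching position $k+1$, a dimension count identifies $C'=\PC_2(k)$, giving conclusion~(i). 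The small case $k=2$ with an intermediate (non-uniform) split is not covered by this argument, but there one checks directly that $C$ is a constant-weight code, so Lemma~\ref{lemma:bonisoli} yields $C\cong\frac{\Delta}{2}\cdot\Sim_2(2)=\frac{\Delta}{2}\cdot\PC_2(2)$, again conclusion~(i).

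In the second case all $s_i=\frac{\Delta}{4}$ with $k\geq 3$, and $t=\Delta-k\frac{\Delta}{4}\geq 0$ restricts to $k\in\{3,4\}$. For $k=3$ I would use $\wt(c+d)=2\Delta-2\#(\supp(c)\cap\supp(d))$, together with $\Delta$-divisibility and the non-disjointness of $c$ with each weight-$\Delta$ word $d\in D$, to show that every nonzero codeword of $C$ has weight $\Delta$; Lemma~\ref{lemma:bonisoli} applied to this constant-weight code of dimension $3$ then yields $C\cong\frac{\Delta}{4}\cdot\Sim_2(3)$, conclusion~(ii). For $k=4$ one has $t=0$, and refining each $P_i$ into $P_i\cap\supp(c)$ and $P_i\setminus\supp(c)$ produces $8$ sub-blocks of equal size $\frac{\Delta}{4}$ on which all generators are constant; thus $C=\frac{\Delta}{4}\cdot C''$ with $C''$ a full-length $[8,4]_2$ code. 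As $C$ is $\Delta$-divisible, $C''$ is $4$-divisible, hence self-orthogonal, and since $\dim C''=4=8/2$ it is self-dual and doubly even; the unique such code is $\RM_2(4)$, giving conclusion~(iii).

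I expect the main obstacle to be the non-constant-weight branch $k=4$: unlike the other cases it cannot be settled by Bonisoli, so one must first argue that $C$ is genuinely a repetition on the refined sub-block partition (which requires all eight sub-blocks to be nonempty, i.e.\ $\Delta\geq 4$) and then invoke the uniqueness of the doubly-even self-dual $[8,4]_2$ code. Secondary care is needed to make the combinatorial dichotomy on $(s_1,\dots,s_k)$ fully airtight and to dispose of the small case $k=2$, where the intermediate configuration need not be uniform but still produces a constant-weight code.
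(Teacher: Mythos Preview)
Your approach coincides with the paper's: both decompose $\supp(D)$ into $k$ blocks of size $\tfrac{\Delta}{2}$, apply Lemma~\ref{lem:supp_intersection} to the block-pair codewords $d_{ij}$ to obtain the dichotomy $s_i\in\{0,\tfrac{\Delta}{2}\}$ for all $i$ versus $s_i=\tfrac{\Delta}{4}$ for all $i$ (the latter forcing $k\in\{3,4\}$), and then identify $C$ in each branch---your use of Bonisoli for $k=3$ and of the uniqueness of the doubly-even self-dual $[8,4]_2$ code for $k=4$ merely spells out what the paper records as ``we see'' and ``we check''. One remark: your computations correctly give $C\cong\tfrac{\Delta}{4}\cdot\Sim_2(3)$ and $C\cong\tfrac{\Delta}{4}\cdot\RM_2(4)$, consistent with the general formulas $\tfrac{\Delta}{2^{k-1}}\cdot\Sim_2(k)$ and $\tfrac{\Delta}{2^{k-2}}\cdot\RM_2(k)$ of Theorem~\ref{thm:binary}; the factor $\tfrac{\Delta}{2}$ appearing in items~(ii) and~(iii) of the lemma as stated is a typo in the paper.
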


\begin{Proof}
	Let $C' = \langle B\setminus\{c\}\rangle$.
	For $k = 2$, the code $C'$ is the span of a single codeword of weight $\Delta$, and by Lemma~\ref{lem:supp_intersection} and the indecomposability of $C$, necessarily $C \cong\frac{\Delta}{2}\cdot\PC_2(2)$.

	So we may assume $k \geq 3$.
	We have $\#\supp(C') = k\cdot\frac{\Delta}{2}$, and the positions in $\supp(C')$ partition into $k$ sets $I_1,\ldots,I_k$ of size $\#I_s = \frac{\Delta}{2}$ on which all codewords in $C'$ agree.
	For $J \subseteq \{1,\ldots,k\}$, let $c^{(J)}$ be the characteristic vector of the set $\bigcup_{s\in J} I_s$.
	The codewords of $C'$ are precisely the vectors $c^{(J)}$ with $\#J$ even.

	For $s\in\{1,\ldots,k\}$, let $b_s = \#(\supp(c) \cap I_s)$.
	Assume $0 < b_t < \frac{\Delta}{2}$ for some $t\in\{1,\ldots,k\}$.
	Let $s\in\{1,\ldots,k\}\setminus\{t\}$.
	The application of Lemma~\ref{lem:supp_intersection} to the codeword $c^{(\{s,t\})}$ shows that $b_s + b_t = \frac{\Delta}{2}$ and in particular, $0 < b_s < \frac{\Delta}{2}$.
	By $k \geq 3$, there is an $s'\in\{1,\ldots,k\}\setminus\{s,t\}$.
	Again, the application of Lemma~\ref{lem:supp_intersection} to the codewords $c^{(\{s',t\})}$ and $c^{(\{s,s'\})}$ shows that $b_{s'} + b_t = b_s + b_{s'} = \frac{\Delta}{2}$.
	This implies $b_s = b_{s'} = b_t = \frac{\Delta}{4}$.
	So by varying $s$ we get that $b_s = \frac{\Delta}{4}$ for all $s\in\{1,\ldots,k\}$.
	Because of $\sum_{s=1}^k b_s \leq \wt(c) = \Delta$, we get $k\in\{3,4\}$.
	In the case $k = 3$ we see that $C\cong \frac{\Delta}{2}\cdot\Sim_2(3)$, and in the case $k = 4$, we check that $C\cong\frac{\Delta}{2}\cdot \RM_2(4)$.

	It remains to consider the case $b_s \in \{0,\frac{\Delta}{2}\}$ for all $s\in\{1,\ldots,k\}$.
	The number of $s$ with $b_s = \frac{\Delta}{2}$ is at least $1$ by the indecomposability of $C$ and at most $2$ by $\sum_{s=1}^k b_s \leq \wt(c)$.
	In the first case, $C\cong \frac{\Delta}{2}\cdot\PC_2(k)$.
	The second case we have $c\in C'$, which is a contradiction.
\end{Proof}

\begin{Lemma}\label{lemma:enlarge_simplex}
	Let $a$ be a non-negative integer, $\Delta = 2^a$ and $k\in\{2,\ldots,a+2\}$.
	Let $C$ be a binary indecomposable full-length $\Delta$-divisible code of dimension $k$.
	Assume that $C = \langle C', c\rangle$ with a codeword $c$ of weight $\Delta$ and $C'\cong_0 \frac{\Delta}{2^{k-2}}\cdot \Sim_2(k-1)$.
	Then
	\begin{enumerate}[(i)]
		\item $k \neq a+2$ and $C \cong\frac{\Delta}{2^{k-1}}\cdot\Sim_2(k)$ or
		\item $k \neq 2$ and $C \cong \frac{\Delta}{2^{k-2}}\cdot\RM_2(k)$.
	\end{enumerate}
	In particular, $C$ does not exist in the case $a=0$.%
	\footnote{In the case $a=0$, necessarily $k=2$, which is excluded in both stated cases.}
\end{Lemma}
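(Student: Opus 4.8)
The plan is to exploit the explicit block structure of the repeated simplex code $C'$ and to turn the $\Delta$-divisibility of $C$ into a system of constraints on how $c$ meets these blocks, which I then solve by a Walsh--Fourier argument. After fixing an isomorphism $C'\cong_0\frac{\Delta}{2^{k-2}}\cdot\Sim_2(k-1)$, the support of $C'$ partitions into $2^{k-1}-1$ blocks $I_v$ of size $m:=\frac{\Delta}{2^{k-2}}$, indexed by the nonzero $v\in\mathbb{F}_2^{k-1}$, such that the codeword of $C'$ with message $u$ is constant equal to $u\cdot v$ on $I_v$. Writing $b_v=\#(\supp(c)\cap I_v)$ and $z=\#(\supp(c)\setminus\supp(C'))$, I first record $\sum_v b_v+z=\wt(c)=\Delta$. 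Since every nonzero word of $C'$ and the word $c\notin C'$ all have weight $\Delta$, applying Lemma~\ref{lem:supp_intersection} to $c$ and the word of message $u\neq 0$ rules out case~\ref{lem:supp_intersection:sim} (as $c\neq u$) and yields $f(u):=\#(\supp c\cap\supp x)=\sum_{v\,:\,u\cdot v=1}b_v\in\{0,\tfrac{\Delta}{2}\}$.

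Next I analyse this two-valued function. Set $V=\{v\neq 0\mid b_v>0\}$, $W=\langle V\rangle$ and $d=\dim W$. Then $f(u)=0$ precisely when $u$ annihilates all active blocks, i.e.\ when $u\in W^\perp$, so $f(u)=\tfrac{\Delta}{2}$ for all $u\notin W^\perp$. Since $f$ depends only on the restriction of $u$ to $W$, this says that the Walsh transform of $b\colon W\to\mathbb{Z}$ (with $b_0:=0$) equals $S:=\sum_v b_v$ at the trivial character and $S-\Delta$ at every nontrivial character. Fourier inversion on $W$ then forces $b_v=\tfrac{\Delta}{2^d}$ for every $v\in W\setminus\{0\}$, whence $V=W\setminus\{0\}$ and, comparing total weights, $z=\tfrac{\Delta}{2^d}$. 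The value $b_v=\Delta/2^d$ is a positive integer bounded by $m=\Delta/2^{k-2}$, giving $k-2\le d\le a$; together with $d\le k-1$ and the lower bound $d\ge 1$ from the indecomposability of $C$ (which forces $V\neq\emptyset$), this leaves $d\in\{k-2,k-1\}$, where $d=k-1$ requires $k\le a+1$ (i.e.\ $k\neq a+2$) and $d=k-2$ requires $k\ge 3$ (i.e.\ $k\neq 2$). Moreover, if $a=0$ then $k=2$ is forced and neither value is admissible, so no such $C$ exists and in particular $a\ge 1$.

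Finally I reconstruct $C$ from the data $(b_v)_v$ and $z$. Because the words of $C'$ are constant on each block and only $c$ is supported on the $z$ extra positions, the isomorphism type of $C$ is determined by the multiset of generator-matrix columns $(v,\epsilon)\in\mathbb{F}_2^{k-1}\times\mathbb{F}_2$, where $\epsilon$ records the value of $c$. For $d=k-1$ one obtains every nonzero column of $\mathbb{F}_2^{k}$ with the single multiplicity $\frac{\Delta}{2^{k-1}}$, so $C\cong\frac{\Delta}{2^{k-1}}\cdot\Sim_2(k)$. For $d=k-2$ the columns are $(v,1)$ with $v\in W$ and $(v,0)$ with $v\notin W$, each of multiplicity $m$; applying the shear $(v,\epsilon)\mapsto(v,\epsilon+u_0\cdot v)$, where $u_0$ spans the line $W^\perp$, sends all of them to the columns $(v,1)$, $v\in\mathbb{F}_2^{k-1}$, whence $C\cong\frac{\Delta}{2^{k-2}}\cdot\RM_2(k)$. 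I expect the Fourier step to be the main obstacle: the point is that the innocuous constraint $f(u)\in\{0,\Delta/2\}$ already forces $b$ to be constant on a whole subspace, pinning $z$ to the power-of-two fraction $\Delta/2^d$ rather than merely to the range $[m/2,m]$ that crude moment estimates would give; the change of basis identifying the second configuration with $\RM_2(k)$ is the other point requiring care.
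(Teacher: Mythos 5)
Your proof is correct, but it follows a genuinely different route from the paper's. The paper's own argument is global and short: it applies Lemma~\ref{lem:supp_intersection} to a single pair (which already yields $a\ge 1$), deduces the length bound $n(C)\le \#\supp(C')+\frac{\Delta}{2}<\frac{5}{2}\Delta$, concludes from $\Delta$-divisibility that the only nonzero weights are $\Delta$ and $2\Delta$, and then splits into two cases: if $C$ has constant weight $\Delta$, Bonisoli's theorem (Lemma~\ref{lemma:bonisoli}) gives $k\ne a+2$ and the repeated simplex code; if a word of weight $2\Delta$ exists, the residual-divisibility lemma (Lemma~\ref{lemma:residual}) forces that residual to be a zero code, so $n(C)=2\Delta$, the weight-$2\Delta$ word is the all-one word, and $C$ is the repeated Reed--Muller code. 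You instead apply Lemma~\ref{lem:supp_intersection} to \emph{all} pairs $(c,x_u)$, encode the resulting two-valued intersection function in a Walsh transform, and invert it to pin down the exact intersection pattern $b_v=\Delta/2^d$ on a subspace $W$ together with $z=\Delta/2^d$; the dichotomy $d\in\{k-2,k-1\}$ then maps directly onto the two conclusions, with the side conditions $k\ne a+2$, $k\ne 2$ and $a\ge 1$ falling out as integrality and dimension constraints, and the final identifications are made by an explicit column-multiset computation (including the transvection identifying the $d=k-2$ configuration with $\frac{\Delta}{2^{k-2}}\cdot\RM_2(k)$). What your approach buys: it is self-contained (no Bonisoli, no residual lemma), it is constructive in that it produces the generator-matrix column multiset of $C$ explicitly, and it makes rigorous the step that the paper compresses into \enquote{from the simplex code structure of $C'$ we conclude\dots}. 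What the paper's approach buys: brevity, and reuse of general-purpose machinery that is needed elsewhere in the classification anyway (e.g.\ the same residual argument reappears in Lemma~\ref{lemma:enlarge_reed_muller}). One small presentational gap on your side: you should state explicitly that $c\notin C'$ (immediate from $\dim C=k>\dim C'$), since that is what rules out case~(i) of Lemma~\ref{lem:supp_intersection}, and note that two binary codes whose generator matrices have column multisets related by an element of $\mathrm{GL}_k(\mathbb{F}_2)$ and a permutation are permutation-equivalent, which is the fact your reconstruction step relies on; both are standard and do not affect correctness.
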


\begin{proof}
	Because of $\dim(C') = k-1$ we have $c\notin C'$.
	By the indecomposability of $C$ there is a codeword $c'\in C'$ with $\supp(c) \cap \supp(c') \ne \emptyset$.
	We have $\wt(c) = \Delta$ as $C'$ is a code of constant weight $\Delta$.
	By Lemma~\ref{lem:supp_intersection}, $\#(\supp(c) \cap \supp(c')) = \frac{\Delta}{2}$ and in particular $a \geq 1$.
	Therefore
	\[
		n(C)
		= \#\supp(C)
		\leq \#\supp(C') + \frac{\Delta}{2} 
		= (2\Delta - 2^{a-k+2}) + \frac{\Delta}{2}
		< \frac{5}{2}\Delta\text{.}
	\]
	So by the $\Delta$-divisibility, $C$ can only contain non-zero codewords of weight $\Delta$ and $2\Delta$.
	If there is no codeword of weight $2\Delta$, Lemma~\ref{lemma:bonisoli} gives $k \neq a+2$ and $C \cong\frac{\Delta}{2^{k-1}} \cdot \Sim_2(k)$.

	Otherwise, there is a codeword $c''\in C$ of weight $2\Delta$.
	Let $D$ be the residual of $C$ with respect to $c''$.
	Since $C$ is full-length, so is $D$, and we get $n(D) = n(C) - \wt(c'') < \frac{\Delta}{2}$.
	Furthermore, as the residual of a binary $\Delta$-divisible code, $D$ is $\frac{\Delta}{2}$-divisible by Lemma~\ref{lemma:residual}, which implies that $D$ is a full-length zero code.
	Therefore $n(D) = 0$ and thus $n(C) = 2\Delta$.
	So $c''$ is the all-one word and $\supp(C')\subseteq \supp(c'')$.
	From the simplex code structure of $C'$ we conclude that $C \cong\frac{\Delta}{2^{k-2}}\cdot\RM_2(k)$, and by the indecomposability of $C$, we have $k \neq 2$.
\end{proof}

\begin{Lemma}\label{lemma:enlarge_reed_muller}
	Let $a$ be a positive integer, $\Delta = 2^a$ and $k\in\{4,\ldots,a+3\}$.
	Let $C$ be a binary indecomposable full-length $\Delta$-divisible code of dimension $k$ with a basis $B$ of codewords of weight $\Delta$.
	Let $c\in B$.
	If $\langle B\setminus\{c\}\rangle\cong_0\frac{\Delta}{2^{k-3}}\cdot \RM_2(k-1)$, then
	\begin{enumerate}[(i)]
		\item $a \geq 2$, $k \leq a+2$ and $C \cong \frac{\Delta}{2^{k-2}}\cdot\RM_2(k)$ or
		\item $k = 4$ and $C\cong \frac{\Delta}{2}\cdot\PC_2(4)$.
	\end{enumerate}
\end{Lemma}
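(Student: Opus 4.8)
The plan is to mimic the proofs of Lemmas~\ref{lemma:enlarge_parity_check} and~\ref{lemma:enlarge_simplex}, exploiting the block structure inherited from $C' = \langle B\setminus\{c\}\rangle$. Since $C'\cong_0 \frac{\Delta}{2^{k-3}}\cdot\RM_2(k-1)$, its support $\supp(C')$ has size $2\Delta$ and partitions into $2^{k-2}$ blocks $I_s$ of size $m = \frac{\Delta}{2^{k-3}}$, indexed by the points $p_s$ of $\mathbb{F}_2^{k-2}$, on which every codeword of $C'$ is constant. The weight-$\Delta$ codewords of $C'$ are precisely the characteristic vectors of the unions $\bigcup_{s\in H}I_s$ with $H$ an affine hyperplane of $\mathbb{F}_2^{k-2}$, while the unique remaining non-zero word is the all-one word of weight $2\Delta$. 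Writing $b_s = \#(\supp(c)\cap I_s)$, I would apply Lemma~\ref{lem:supp_intersection} to $c$ and each weight-$\Delta$ word of $C'$; since $c\notin C'$ rules out case~\ref{lem:supp_intersection:sim}, this gives $\sum_{s\in H}b_s\in\{0,\frac{\Delta}{2}\}$ for every affine hyperplane $H$. Evaluating a complementary pair $H, H^{c}$ shows that $T\coloneqq\sum_s b_s$ lies in $\{0,\frac{\Delta}{2},\Delta\}$, and indecomposability excludes $T=0$, leaving the two cases $T=\Delta$ and $T=\frac{\Delta}{2}$, which I expect to produce conclusions (i) and (ii).

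In the case $T=\Delta$ we have $\supp(c)\subseteq\supp(C')$, and for every complementary pair both sides must equal $\frac{\Delta}{2}$, i.e. $\sum_{s\in H}b_s=\frac{\Delta}{2}$ for all affine hyperplanes $H$. I would rewrite this as $\sum_s(-1)^{\langle a,p_s\rangle}b_s=0$ for all non-zero $a\in\mathbb{F}_2^{k-2}$ together with $\sum_s b_s=\Delta$; Fourier inversion over $\mathbb{F}_2^{k-2}$ then forces the unique solution $b_s=\frac{\Delta}{2^{k-2}}$ for all $s$. Integrality of this value gives $k\le a+2$ (hence $a\ge 2$, since $k\ge 4$), so the case $k=a+3$ cannot occur here. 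Uniformity means $c$ halves each block, so every codeword of $C$ is constant on the resulting $2^{k-1}$ half-blocks; collapsing them presents $C$ as $\frac{\Delta}{2^{k-2}}\cdot\tilde C$, where $\tilde C$ is generated by the $2$-fold repetition of $\RM_2(k-1)$ and the word selecting one half of each block. Interpreting this extra coordinate as the final variable of $\mathbb{F}_2^{k-1}$ identifies $\tilde C$ with the affine-function code $\RM_2(k)$, which is conclusion (i).

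In the case $T=\frac{\Delta}{2}$ each complementary pair has exactly one side carrying the full mass $\frac{\Delta}{2}$ and the other vanishing, so the set $\{s: b_s>0\}$ is contained in an affine hyperplane for every direction. Two distinct blocks with $b_s>0$ could be separated by some direction, a contradiction, so this set is a single block; indecomposability makes it non-empty, giving a unique $s_0$ with $b_{s_0}=\frac{\Delta}{2}$ and $b_s=0$ otherwise. From $b_{s_0}\le m=\frac{\Delta}{2^{k-3}}$ I then read off $k=4$ and $b_{s_0}=m$, so $c$ covers the whole block $I_{s_0}$ together with a fresh block of $\frac{\Delta}{2}$ positions outside $\supp(C')$. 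All codewords of $C$ are constant on these five blocks, and collapsing them yields a length-$5$ code whose codewords are exactly the even-weight vectors; using $\RM_2(3)\cong\PC_2(3)$ from Remark~\ref{remark:isomorphisms_list} this is $\PC_2(4)$, giving $C\cong\frac{\Delta}{2}\cdot\PC_2(4)$, which is conclusion (ii).

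The main obstacle is the case $T=\Delta$, namely proving that $c$ must meet each block in exactly half of its positions. The Fourier (equivalently, character-table) computation handles this cleanly and has the bonus of producing the integrality constraint $k\le a+2$ that both forces $a\ge 2$ and rules out $k=a+3$, thereby separating conclusion (i) from the excluded parameter range. Once the $b_s$ are pinned down, the final identifications of the collapsed codes with $\RM_2(k)$ and $\PC_2(4)$ reduce to routine bookkeeping of affine functions and even-weight vectors.
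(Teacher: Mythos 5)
Your proof is correct, and it takes a genuinely different route from the paper's. The paper argues globally: it takes the weight-$2\Delta$ codeword $e\in C'$, bounds $2\Delta\le n(C)\le\frac{5}{2}\Delta$ via Lemma~\ref{lem:supp_intersection} and indecomposability, and passes to the residual of $C$ with respect to $e$, which by Lemma~\ref{lemma:residual} is $\frac{\Delta}{2}$-divisible of length at most $\frac{\Delta}{2}$, hence of length $0$ or $\frac{\Delta}{2}$; this yields exactly your two cases in the form $n(C)=2\Delta$ (your $T=\Delta$) and $n(C)=\frac{5}{2}\Delta$ (your $T=\frac{\Delta}{2}$). In the first case the paper notes that all but one non-zero codeword has weight $\Delta$, applies Bonisoli's theorem (Lemma~\ref{lemma:bonisoli}) to a constant-weight subcode of codimension $1$, and then quotes Lemma~\ref{lemma:enlarge_simplex}; in the second case it solves the first four Pless power moments for $A_\Delta$, $A_{2\Delta}$, $B_2$, $B_3$, and non-negativity of $B_3$ forces $k\le 4$, after which Lemma~\ref{lemma:enlarge_parity_check} finishes. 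You instead work locally with the block decomposition of $\supp(C')$: Lemma~\ref{lem:supp_intersection} applied to all affine-hyperplane codewords gives $\sum_{s\in H}b_s\in\{0,\frac{\Delta}{2}\}$; for $T=\Delta$, Fourier inversion forces $b_s=\frac{\Delta}{2^{k-2}}$ for all $s$, and integrality of this value supplies the constraints $k\le a+2$ and $a\ge 2$ (which the paper obtains through Bonisoli inside Lemma~\ref{lemma:enlarge_simplex}); for $T=\frac{\Delta}{2}$, your separation argument pins the mass to a single block, and $\frac{\Delta}{2}\le\frac{\Delta}{2^{k-3}}$ gives $k=4$ (the counterpart of the paper's $B_3\ge 0$ step); finally you identify the collapsed codes with $\RM_2(k)$ and $\PC_2(4)$ directly instead of invoking Lemmas~\ref{lemma:enlarge_simplex} and~\ref{lemma:enlarge_parity_check}. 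Each route has its merits: the paper's proof is short because it reuses the two preceding lemmas and standard machinery (residual divisibility, power moments), whereas yours is self-contained modulo Lemma~\ref{lem:supp_intersection}, avoids MacWilliams-type identities and residual codes altogether, and exhibits the explicit coordinate structure of $C$ along the way. One trivial remark: in your case $T=\frac{\Delta}{2}$, non-emptiness of $\{s : b_s>0\}$ already follows from $T>0$; indecomposability is only needed to exclude $T=0$.
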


\begin{proof}
Let $C' = \langle B\setminus\{c\}\rangle$.
We have $\#\supp(C') = 2\Delta$ and $C'$ contains a codeword $e$ of weight $2\Delta$.
Lemma~\ref{lem:supp_intersection} and the indecomposability of $C$ yield $2\Delta \leq n(C) \leq \frac{5}{2}\Delta$.
The residual $D$ of $C$ with respect to $e$ is a full-length $\frac{\Delta}{2}$-divisible code of length $n(D) \leq  \frac{5}{2}\Delta - 2\Delta = \frac{\Delta}{2}$.
This leaves only the possibilities $n(D)\in\{0,\frac{\Delta}{2}\}$.

Case 1: $n(D) = 0$.
Therefore, $n(C) = 2\Delta$.
So $e$ is the all-one word in $C$ and hence the unique codeword of weight $2\Delta$.
Thus all but a single non-zero codeword are of weight $\Delta$.
Therefore, $C = \langle C'', c''\rangle$, where $C''$ is a subcode of dimension $k-1$ of constant weight~$\Delta$, and $c''$ is a codeword of weight~$\Delta$.
Lemma~\ref{lemma:bonisoli} shows that $k \leq a+2$ and $C''\cong_0\frac{\Delta}{2^{k-2}}\cdot \Sim_2(k-1)$.
Now by Lemma~\ref{lemma:enlarge_simplex} and the fact that $C$ contains a codeword of weight $2\Delta$, we have $C \cong \frac{\Delta}{2^{k-2}}\cdot\RM_2(k)$.

Case 2: $n(D) = \frac{\Delta}{2}$.
Therefore, $n(C) = \frac{5}{2}\Delta$.
The only non-zero weights of $C$ are $\Delta$ and $2\Delta$.
Let $A_\Delta$ and $A_{2\Delta}$ be their frequencies and let $(B_i)$ be the weight distribution of $C^\perp$.
The first two Pless power moments give the linear equation system
\[
	\begin{pmatrix}
		1 & 1 \\ 1 & 2
	\end{pmatrix}
	\begin{pmatrix}
		A_\Delta \\ A_{2\Delta}
	\end{pmatrix}
	=
	\begin{pmatrix}
		\#C - 1 \\ \frac{5}{4}\#C
	\end{pmatrix}
\]
with the unique solution $A_\Delta = \frac{3}{4}\#C - 2$ and $A_{2\Delta} = \frac{1}{4}\#C + 1$.
Now the third and the forth Pless power moment yield
%
\begin{align*}
	B_2 & = \frac{4}{\#C}\Delta^2 + \frac{3}{8}\Delta^2 - \frac{5}{4}\Delta\quad\text{and} \\
	B_3 & = \left(\frac{2}{\#C} - \frac{1}{8}\right)\Delta^3\text{.}
\end{align*}
The number $B_3$ must be non-negative, which is equivalent to $\#C \leq 16$ or $k \leq 4$.
So $k = 4$ and $C'\cong_0 \frac{\Delta}{2}\cdot\RM_2(3) \cong\frac{\Delta}{2}\cdot\PC_2(3)$.
Now by Lemma~\ref{lemma:enlarge_parity_check} and $n(C) = \frac{5}{2}\Delta$, we have $C\cong \frac{\Delta}{2}\cdot\PC_2(4)$.
\end{proof}

We remark that the setting of the above lemma makes also sense for $k = 3$.
We didn't include it as the first order Reed-Muller code $\RM_2(2) \cong \mathbb{F}_2^2$ (and its repetitions) are decomposable.
For $k = 3$, Case~1 is the same, and it is easily checked that there is no suitable code $C$ in Case~2.

\begin{Theorem}
  \label{thm:binary}
  Let $\Delta$ be a positive integer and $a$ the largest integer such that $\Delta$ is divisible by $2^a$.
  The following codes form a complete and non-redundant list of all isomorphism types of binary indecomposable full-length $\Delta$-divisible codes of dimension $k\geq 1$ that are spanned by codewords of weight~$\Delta$.
  \begin{enumerate}[(i)]
	\item\label{thm:binary:sim} $\frac{\Delta}{2^{k-1}}\cdot\Sim_2(k)$ with $1 \leq k \leq a+1$,
	\item\label{thm:binary:rm} $\frac{\Delta}{2^{k-2}}\cdot\RM_2(k)$ with $3 \leq k \leq a+2$,
  	\item\label{thm:binary:pc} $\frac{\Delta}{2}\cdot \PC_2(k)$ with $a \geq 1$ and $k \geq 4$.
  \end{enumerate}
\end{Theorem}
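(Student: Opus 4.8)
The statement asks for three things: that every code in the list has the asserted properties, that the list is exhaustive, and that it is non-redundant. The plan is to dispose of the first and third by the bookkeeping already prepared in Section~\ref{sec:prelim}, and to obtain exhaustiveness by an induction on the dimension that feeds the three enlargement lemmas. For the property check I would collect from Section~\ref{sec:prelim} that $\Sim_2(k)$, $\RM_2(k)$ (with $k\geq 3$) and $\PC_2(k)$ are full-length, indecomposable and spanned by their weight-$\Delta$ codewords; since $m$-fold repetition multiplies every weight by $m$ and preserves both full-length and indecomposability, the listed repetitions inherit all of these properties, and the divisibility hypotheses are exactly the conditions making the repetition factors $\frac{\Delta}{2^{k-1}}$, $\frac{\Delta}{2^{k-2}}$, $\frac{\Delta}{2}$ positive integers. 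Non-redundancy is then immediate from Remark~\ref{remark:isomorphisms_list}: dimension is invariant, the three series are separated within a fixed dimension by their weight enumerators, and the only coincidences among possibly repeated members are $\PC_2(1)\cong 2\cdot\Sim_2(1)$, $\PC_2(2)\cong\Sim_2(2)$ and $\PC_2(3)\cong\RM_2(3)$, all excluded by requiring $k\geq 4$ in the parity check series.

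For exhaustiveness I would first reduce to $\Delta=2^a$. Writing $\Delta=2^a m$ with $m$ odd, Lemma~\ref{lemma:delta_general} shows that a $\Delta$-divisible code is $\cong_0$ an $m$-fold repetition $m\cdot C_0$ of a $2^a$-divisible code; taking $C_0$ full-length and using that our $C$ is full-length gives $C\cong m\cdot C_0$. As repetition by the fixed odd factor $m$ preserves indecomposability in both directions and scales all weights by $m$, the code $C_0$ is again indecomposable, full-length, $2^a$-divisible and spanned by codewords of weight $2^a$. Classifying such $C_0$ therefore classifies $C$ after multiplying every repetition factor by $m$, which reproduces precisely the factors in the statement; so I may assume $\Delta=2^a$.

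The heart is then an induction on $k$. For $k=1$ a full-length code spanned by a weight-$\Delta$ word is the $\Delta$-fold repetition of $\Sim_2(1)$, giving item (i) with $k=1$. For $k\geq 2$ I would choose a basis $B$ of $C$ consisting of weight-$\Delta$ codewords, which exists because $C$ is spanned by such words and, by $\Delta$-divisibility, $\Delta$ is the minimum weight. Lemma~\ref{lem:indecomposable_chain} then produces a $c\in B$ for which $C'=\langle B\setminus\{c\}\rangle$ is an indecomposable subcode of dimension $k-1$; being a subcode it is $\Delta$-divisible, and $B\setminus\{c\}$ spans it by weight-$\Delta$ words, so the induction hypothesis identifies its full-length version with one of the three families in dimension $k-1$. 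Splitting into these three cases, I would apply the matching enlargement lemma --- Lemma~\ref{lemma:enlarge_simplex} when $C'\cong_0\frac{\Delta}{2^{k-2}}\cdot\Sim_2(k-1)$, Lemma~\ref{lemma:enlarge_reed_muller} when $C'\cong_0\frac{\Delta}{2^{k-3}}\cdot\RM_2(k-1)$, and Lemma~\ref{lemma:enlarge_parity_check} when $C'\cong_0\frac{\Delta}{2}\cdot\PC_2(k-1)$ --- and in each case the conclusion is again a member of the three families in dimension $k$, closing the induction. A point to verify is that the admissible values of $k-1$ for each family coincide with the range of $k$ demanded by the corresponding enlargement lemma, and that, since the parity check series starts only at $k\geq 4$, the dimension-three code $\PC_2(3)\cong\RM_2(3)$ is always handled through its Reed-Muller description, so that the degenerate cases of Lemma~\ref{lemma:enlarge_parity_check} are never reached.

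Given the three enlargement lemmas, the theorem is largely organisational, and the main obstacle is not a single sharp estimate but the careful bookkeeping around boundary dimensions: confirming that the unique (non-redundant) form of each $(k-1)$-dimensional subcode feeds into exactly one enlargement lemma, that the repetition factors produced match $\frac{\Delta}{2^{k-1}}$, $\frac{\Delta}{2^{k-2}}$ and $\frac{\Delta}{2}$, and that passing between $\cong_0$ and genuine isomorphism (i.e.\ deleting zero positions) causes no trouble. All of the genuinely delicate work, namely how a new weight-$\Delta$ generator can attach to a repeated simplex, Reed-Muller or parity check code, has already been absorbed into Lemmas~\ref{lemma:enlarge_simplex}, \ref{lemma:enlarge_reed_muller} and~\ref{lemma:enlarge_parity_check}.
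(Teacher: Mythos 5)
Your proposal is correct and follows essentially the same route as the paper's own proof: properties and non-redundancy via Remark~\ref{remark:isomorphisms_list}, reduction to $\Delta=2^a$ via Lemma~\ref{lemma:delta_general}, and induction on the dimension using Lemma~\ref{lem:indecomposable_chain} to peel off a basis vector, with the three enlargement lemmas handling the respective cases for the $(k-1)$-dimensional subcode. Your additional bookkeeping (minimum weight equals $\Delta$, matching of dimension ranges, exclusion of the degenerate cases of Lemma~\ref{lemma:enlarge_parity_check} since $k\geq 5$ there) is exactly what the paper's terser write-up implicitly relies on.
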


\begin{Proof}
All codes in the list have the stated properties, and by Remark~\ref{remark:isomorphisms_list}, the list is non-redundant.
By Lemma~\ref{lemma:delta_general}, it is enough to consider $\Delta = 2^a$.

It is clear that the only sought-after code of dimension $1$ is the code $\Delta \cdot \Sim_2(1)$.
Now assume that $C$ is a binary indecomposable full-length $\Delta$-divisible code of dimension $k\geq 2$ having a basis $B$ of codewords of weight $\Delta$.
By Lemma~\ref{lem:indecomposable_chain}, there is a codeword $c\in B$ such that $C' \coloneqq \langle B\setminus c\rangle$ is indecomposable.
By induction, we are in one of the following situations:

Case~\ref{thm:binary:sim}. $2 \leq k \leq a+2$ and $C'\cong_0\frac{\Delta}{2^{k-2}}\cdot\Sim_2(k-1)$.
By Lemma~\ref{lemma:enlarge_simplex}, either $k \neq 2$ and $C\cong \frac{\Delta}{2^{k-2}}\cdot \RM_2(k)$, or $k \neq a+2$ and $C \cong\frac{\Delta}{2^{k-1}}\cdot\Sim_2(k)$.

Case~\ref{thm:binary:rm}. $4 \leq k \leq a+3$ (so in particular $a \geq 1$) and $C'\cong_0\frac{\Delta}{2^{k-3}}\cdot\RM_2(k-1)$.
By Lemma~\ref{lemma:enlarge_reed_muller}, either $a \geq 2$ and $k \leq a+2$ and $C\cong \frac{\Delta}{2^{k-2}}\cdot\RM_2(k)$, or $k=4$ and $C\cong \frac{\Delta}{2}\cdot \PC_2(k)$.

Case~\ref{thm:binary:pc}. $a \geq 1$, $k \geq 5$ and $C'\cong_0\frac{\Delta}{2}\cdot\PC_2(k-1)$.
Now by Lemma~\ref{lemma:enlarge_parity_check}, $C\cong\frac{\Delta}{2}\cdot\PC_2(k)$.
\end{Proof}

Our main result Theorem~\ref{thm:main} is now a direct consequence of Theorem~\ref{thm:non_binary} and Theorem~\ref{thm:binary} and the discussion of decomposability of linear codes in Section~\ref{sec:prelim}.

We conclude this section by restating Theorem~\ref{thm:main} for the important special cases of even, doubly-even and triply-even (i.e.~$2$-divisible, $4$-divisible and $8$-divisible) binary codes.
For the formulation of the statements, we use the isomorphisms $2\cdot \Sim_2(1) \cong \PC_2(1)$, $\Sim_2(2) \cong\PC_2(2)$ and $\RM_2(3) \cong\PC_2(3)$.

\begin{Corollary}
	Let $C$ be an even binary code that is spanned by codewords of weight $2$.
	Then $C$ isomorphic to an essentially unique direct sum of binary parity check codes, possibly extended by zero positions.
\end{Corollary}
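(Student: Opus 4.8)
The plan is to specialize Theorem~\ref{thm:main} to the present parameters and to recognize each resulting building block as a parity check code by means of Remark~\ref{remark:isomorphisms_list}.

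First I would note that an even binary code is precisely a $2$-divisible code, so we are in the case $q = 2$ and $\Delta = 2$. The integer $a$ appearing in Theorem~\ref{thm:main} is the largest exponent with $2^a \mid 2$, hence $a = 1$. Feeding these values into Theorem~\ref{thm:main}, the code $C$ is isomorphic to a direct sum of the listed building blocks, possibly extended by zero positions. Type~(i) contributes $\frac{2}{2^{k-1}}\cdot\Sim_2(k)$ for $k \in \{1,\ldots,a+1\} = \{1,2\}$; type~(ii) contributes $\frac{2}{2^{k-2}}\cdot\RM_2(k)$ for $k \in \{3,\ldots,a+2\} = \{3\}$; and, since $a \geq 1$, type~(iii) contributes $\frac{2}{2}\cdot\PC_2(k) = \PC_2(k)$ for every $k \geq 4$.

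Next I would evaluate the repetition factors and apply the isomorphisms of Remark~\ref{remark:isomorphisms_list}. In type~(i) the factor $\frac{2}{2^{k-1}}$ equals $2$ for $k=1$ and $1$ for $k=2$, so the two summands are $2\cdot\Sim_2(1) \cong \PC_2(1)$ and $\Sim_2(2) \cong \PC_2(2)$. In type~(ii) the factor $\frac{2}{2^{k-2}}$ equals $1$ at $k=3$, giving $\RM_2(3) \cong \PC_2(3)$. Combined with the type~(iii) codes $\PC_2(k)$ for $k \geq 4$, every admissible building block is a parity check code $\PC_2(k)$ with $k \geq 1$, and conversely each such parity check code occurs for exactly one choice of parameters. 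Hence $C$ is isomorphic to a direct sum of parity check codes, possibly extended by zero positions.

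For essential uniqueness I would invoke the uniqueness clause of Theorem~\ref{thm:main}, which determines the multiset of building blocks of $C$ up to order. Since the identifications above set up a bijection between those building blocks (for the parameters $q = \Delta = 2$) and the parity check codes $\PC_2(k)$, $k \geq 1$, the decomposition of $C$ into parity check codes inherits this uniqueness up to order. I expect no genuine obstacle here: the only care needed is the bookkeeping of the repetition factors together with the correct application of the three isomorphisms $\PC_2(1)\cong 2\cdot\Sim_2(1)$, $\PC_2(2)\cong\Sim_2(2)$ and $\PC_2(3)\cong\RM_2(3)$, so as to confirm that the families of Theorem~\ref{thm:main} collapse exactly onto the full family of parity check codes with no omission or repetition.
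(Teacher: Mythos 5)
Your proposal is correct and follows exactly the paper's intended derivation: the paper states this corollary as a direct specialization of Theorem~\ref{thm:main} to $q=2$, $\Delta=2$ (so $a=1$), using precisely the three isomorphisms $2\cdot\Sim_2(1)\cong\PC_2(1)$, $\Sim_2(2)\cong\PC_2(2)$ and $\RM_2(3)\cong\PC_2(3)$ that you invoke to identify every admissible building block as a parity check code, with uniqueness inherited from the uniqueness clause of Theorem~\ref{thm:main}.
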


The following doubly-even case is essentially \cite[Th.~6.5]{pless1975classification}.
\begin{Corollary}
	Let $C$ be a doubly-even binary code that is spanned by codewords of weight $4$.
	Then $C$ is isomorphic to an essentially unique direct sum of codes of the following form, possibly extended by zero positions.
	\begin{enumerate}[(i)]
		\item The binary simplex code of dimension $3$.
		\item The binary first order Reed-Muller code of dimension $4$.
		\item The $2$-fold repetition of a binary parity check code.
	\end{enumerate}
\end{Corollary}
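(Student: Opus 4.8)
The plan is to obtain the corollary as the specialization of Theorem~\ref{thm:main} to $q=2$ and $\Delta=4$, followed by a translation of the resulting building blocks into the three families stated here via the isomorphisms of Remark~\ref{remark:isomorphisms_list}. First I would record that, since $\Delta=4=2^2$, the largest integer $a$ with $2^a\mid\Delta$ is $a=2$. Feeding this into Theorem~\ref{thm:main} enumerates the indecomposable building blocks: from case~(i), the codes $\frac{4}{2^{k-1}}\cdot\Sim_2(k)$ for $k\in\{1,2,3\}$, that is $4\cdot\Sim_2(1)$, $2\cdot\Sim_2(2)$ and $\Sim_2(3)$; from case~(ii), the codes $\frac{4}{2^{k-2}}\cdot\RM_2(k)$ for $k\in\{3,4\}$, that is $2\cdot\RM_2(3)$ and $\RM_2(4)$; and from case~(iii), which applies because $a\geq 1$, the codes $2\cdot\PC_2(k)$ for $k\geq 4$.

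Next I would collapse this list using the isomorphisms of Remark~\ref{remark:isomorphisms_list} together with the fact that taking repetitions commutes with isomorphism. Concretely, $4\cdot\Sim_2(1)=2\cdot(2\cdot\Sim_2(1))\cong 2\cdot\PC_2(1)$, $2\cdot\Sim_2(2)\cong 2\cdot\PC_2(2)$ and $2\cdot\RM_2(3)\cong 2\cdot\PC_2(3)$. Hence every building block apart from $\Sim_2(3)$ and $\RM_2(4)$ is isomorphic to a code $2\cdot\PC_2(k)$ with $k\geq 1$, and conversely all such codes arise. This identifies the complete family of indecomposable building blocks as $\Sim_2(3)$, $\RM_2(4)$ and the $2$-fold repetitions of binary parity check codes, which is precisely the list (i)--(iii) of the corollary; the clause permitting extension by zero positions is inherited verbatim from Theorem~\ref{thm:main}.

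The essential uniqueness then follows from the uniqueness clause of Theorem~\ref{thm:main}, provided the translation above does not merge distinct summands. I would check this by noting that $\PC_2(k)$ has dimension $k$, so the codes $2\cdot\PC_2(k)$ are pairwise non-isomorphic; thus the multiset of corollary building blocks is still determined by $C$ up to order. I do not expect a genuine obstacle here: the only step requiring care is the bookkeeping that each building block of Theorem~\ref{thm:main} maps to exactly one of the three stated families and that no family is left out or double counted.
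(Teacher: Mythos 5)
Your proposal is correct and follows exactly the paper's own route: the paper derives this corollary by specializing Theorem~\ref{thm:main} to $q=2$, $\Delta=4$ (so $a=2$) and then merging the small simplex and Reed--Muller blocks into the parity check family via the isomorphisms $2\cdot\Sim_2(1)\cong\PC_2(1)$, $\Sim_2(2)\cong\PC_2(2)$ and $\RM_2(3)\cong\PC_2(3)$, which is precisely your translation. Your uniqueness bookkeeping is likewise covered by the paper's Remark~\ref{remark:isomorphisms_list}, which guarantees no further isomorphisms among repeated codes of these series.
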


\begin{Corollary}
	Let $C$ be a triply-even binary code that is spanned by codewords of weight $8$.
	Then $C$ is isomorphic to an essentially unique direct sum of codes of the following form, possibly extended by zero positions.
	\begin{enumerate}[(i)]
		\item The $2$-fold repetition of the binary simplex code of dimension $3$.
		\item The binary simplex code of dimension $4$.
		\item The $2$-fold repetition of the binary first order Reed-Muller code of dimension $4$.
		\item The binary first order Reed-Muller code of dimension $5$.
		\item The $4$-fold repetition of a binary parity check code.
	\end{enumerate}
\end{Corollary}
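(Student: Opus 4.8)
The plan is to obtain this Corollary as the instance $\Delta = 8$ of Theorem~\ref{thm:main}. First I would note that for $q = 2$ and $\Delta = 8$ the largest power of $2$ dividing $\Delta$ gives $a = 3$. Substituting these values into Theorem~\ref{thm:main} yields a finite explicit list of building blocks: the simplex repetitions $\frac{8}{2^{k-1}}\cdot\Sim_2(k)$ for $k\in\{1,2,3,4\}$, the first order Reed-Muller repetitions $\frac{8}{2^{k-2}}\cdot\RM_2(k)$ for $k\in\{3,4,5\}$, and, since $a \geq 1$, the parity check repetitions $4\cdot\PC_2(k)$ for every $k\geq 4$. Evaluating the repetition factors turns the first two families into $8\cdot\Sim_2(1)$, $4\cdot\Sim_2(2)$, $2\cdot\Sim_2(3)$, $\Sim_2(4)$ and $4\cdot\RM_2(3)$, $2\cdot\RM_2(4)$, $\RM_2(5)$.

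The second step is to fold the low-dimensional simplex and Reed-Muller blocks into the parity check family by means of the isomorphisms $2\cdot\Sim_2(1)\cong\PC_2(1)$, $\Sim_2(2)\cong\PC_2(2)$ and $\RM_2(3)\cong\PC_2(3)$ recorded above. Concretely, $8\cdot\Sim_2(1) = 4\cdot(2\cdot\Sim_2(1)) \cong 4\cdot\PC_2(1)$, while $4\cdot\Sim_2(2)\cong 4\cdot\PC_2(2)$ and $4\cdot\RM_2(3)\cong 4\cdot\PC_2(3)$. Combined with the blocks $4\cdot\PC_2(k)$ for $k\geq 4$ that are already present, this produces exactly the $4$-fold repeated parity check codes of all dimensions $k\geq 1$, which is item~(v). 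The four blocks that remain, namely $2\cdot\Sim_2(3)$, $\Sim_2(4)$, $2\cdot\RM_2(4)$ and $\RM_2(5)$, are precisely items~(i)--(iv).

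For the essential uniqueness I would appeal directly to the uniqueness clause of Theorem~\ref{thm:main}, which says that the multiset of building blocks attached to $C$ is determined by $C$ up to order and zero positions. I would then remark that the consolidation carried out above preserves this: by Remark~\ref{remark:isomorphisms_list} the codes $2\cdot\Sim_2(3)$, $\Sim_2(4)$, $2\cdot\RM_2(4)$ and $\RM_2(5)$ are pairwise non-isomorphic and none of them is isomorphic to a $4$-fold repeated parity check code, so the five families listed are genuinely distinct and the decomposition is non-redundant. I do not expect a real obstacle here; the only care-point is the bookkeeping of the previous paragraph, that is, checking that every block arising from Theorem~\ref{thm:main} at $\Delta = 8$ is accounted for exactly once and that the three stated isomorphisms suffice to collapse the stray low-dimensional simplex and Reed-Muller codes into a single parity check family of arbitrary dimension.
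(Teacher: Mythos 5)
Your proposal is correct and follows exactly the paper's route: the corollary is the instance $q=2$, $\Delta=8$ (so $a=3$) of Theorem~\ref{thm:main}, with the stray blocks $8\cdot\Sim_2(1)$, $4\cdot\Sim_2(2)$ and $4\cdot\RM_2(3)$ absorbed into the parity check family via the isomorphisms $2\cdot\Sim_2(1)\cong\PC_2(1)$, $\Sim_2(2)\cong\PC_2(2)$ and $\RM_2(3)\cong\PC_2(3)$, which is precisely the consolidation the paper announces before stating the corollary. Your uniqueness argument via the uniqueness clause of Theorem~\ref{thm:main} together with Remark~\ref{remark:isomorphisms_list} is also the intended one.
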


\section{An application}
As an application, we consider binary projective $\Delta$-divisible $[4\Delta,k]_2$-codes $C$ with $\Delta = 2^a$.
We will find an alternative proof of \cite[Thm. 4]{Liu-2010-IntJInfCodingTheory1[4]:355-370} in the case of projective codes.
The idea is to apply Theorem~\ref{thm:main} to the subcode of $C$ spanned by all codewords of weight $\Delta$.

If $C$ does not contain the all-one word $\boldsymbol{1}$, we may look at the code $D = \langle C, \boldsymbol{1}\rangle$, which is a projective $\Delta$-divisible $[4\Delta,k+1]_2$-code.
Therefore, we may restrict ourself to the codes $D$ containing the all-one word, in the sense that all codes $C$ will show up as subcodes of these codes $D$.\\[-3mm]

\begin{Lemma}
	\label{lemma:4_delta_subcode}
	Let $\Delta = 2^a$ and $C$ be a binary projective $\Delta$-divisible $[4\Delta,k]_2$-code containing the all-one word.
	Let $C'$ be the subcode spanned by all codewords of weight $\Delta$.
	Then $C'$ falls into one of the following cases.
	\begin{enumerate}[(i)]
		\item $C' \cong_0 \frac{\Delta}{2}\cdot\PC_2(7)$ and $k = a+6$.
		\item $C' \cong_0 \frac{\Delta}{2^{\ell-2}}\cdot\RM_2(\ell)\oplus\frac{\Delta}{2^{\ell-2}}\cdot\RM_2(\ell)$ and $k = a + \ell + 2$ where $\ell\in\{2,\ldots,a+2\}$.
		\item $C' \cong_0 \frac{\Delta}{2}\cdot\Sim_2(2) \oplus\Delta\cdot\Sim_2(1)$ and $k = a + 4$,
		\item $C' \cong_0 \boldsymbol{0}$ and $k = a+3$.
	\end{enumerate}
\end{Lemma}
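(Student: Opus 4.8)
The plan is to nail down the weights of $C$, apply Theorem~\ref{thm:main} to $C'$, read off $k$ from the Pless power moments, and finish with a short Diophantine enumeration. First I would exploit the all-one word: since $\mathbf{1}\in C$, the complement $\mathbf{1}+c$ of any codeword $c$ again lies in $C$ and has weight $4\Delta-\wt(c)$. Together with $\Delta$-divisibility this confines every nonzero weight of $C$ to $\{\Delta,2\Delta,3\Delta,4\Delta\}$, makes $\mathbf{1}$ the unique codeword of weight $4\Delta$, and yields $A_\Delta=A_{3\Delta}$ by complementation.

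Since $C'$ is by definition spanned by its codewords of weight $\Delta$, Theorem~\ref{thm:main} (in the binary form of Theorem~\ref{thm:binary}) applies and expresses $C'\cong_0$ as a direct sum of repeated simplex, first order Reed-Muller and parity check codes. For the bookkeeping I would record two quantities per block: its effective length (namely $2\Delta-\Delta\,2^{1-t}\in[\Delta,2\Delta)$ for a repeated $\Sim_2(t)$, exactly $2\Delta$ for a repeated $\RM_2(t)$, and $\tfrac{t+1}{2}\Delta$ for $\tfrac{\Delta}{2}\cdot\PC_2(t)$) and its number of weight-$\Delta$ codewords ($2^t-1$, $2^t-2$ and $\binom{t+1}{2}$ respectively). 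Because every weight-$\Delta$ codeword of $C$ already lies in $C'$, the total $A_\Delta$ equals the sum of these per-block counts.

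To pin down $k$ I would feed the weight structure into the first three power moments \eqref{eq_ppm1}--\eqref{eq_ppm3}. Projectivity and full length give $B_1=B_2=0$; with $A_0=A_{4\Delta}=1$ and $A_\Delta=A_{3\Delta}$ the first two moments coincide and yield $2A_\Delta+A_{2\Delta}=2^k-2$, and substituting this into the third moment collapses everything to the single clean relation $A_\Delta=2^{\,k-a-1}-4$. Hence $k$ is determined by $A_\Delta$ alone, and in particular $A_\Delta+4$ must be a power of $2$. This already produces the stated dimensions: $A_\Delta=\binom{8}{2}=28$ for $\tfrac{\Delta}{2}\cdot\PC_2(7)$ gives $k=a+6$; $A_\Delta=2(2^{\ell}-2)$ for two copies of $\tfrac{\Delta}{2^{\ell-2}}\cdot\RM_2(\ell)$ gives $k=a+\ell+2$; $A_\Delta=3+1=4$ for $\tfrac{\Delta}{2}\cdot\Sim_2(2)\oplus\Delta\cdot\Sim_2(1)$ gives $k=a+4$; and $A_\Delta=0$ gives $k=a+3$.

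The heart of the argument, and the step I expect to be the main obstacle, is showing that these are the only admissible block decompositions. Since $C'\subseteq C$ has length $4\Delta$ the effective lengths sum to at most $4\Delta$; as each block contributes at least $\Delta$, there are at most four blocks, among them at most two Reed-Muller blocks and at most one parity check block. Within this finite range I would intersect the effective-length budget with the requirement that $A_\Delta+4$ be a power of $2$, which turns out to be very rigid: a single simplex or a single Reed-Muller block, or two Reed-Muller blocks of unequal dimension, or any parity check block other than $\PC_2(7)$, each either makes $A_\Delta+4$ fail to be a power of $2$ or overruns the length, and is discarded. What survives is exactly the empty sum, $\Sim_2(2)\oplus\Sim_2(1)$, two equal Reed-Muller blocks, $\PC_2(7)$, and the full-support all-simplex configuration $4\cdot\Sim_2(1)$. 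The fiddly point is the degenerate identification $\RM_2(2)\cong\Sim_2(1)\oplus\Sim_2(1)$, which forces $4\cdot\Sim_2(1)$ to be rewritten as $\Delta\cdot\RM_2(2)\oplus\Delta\cdot\RM_2(2)$, i.e.\ as case~(ii) with $\ell=2$, rather than being counted as a separate family. Matching the surviving configurations to (i)--(iv) and invoking the dimension relation above then finishes the proof.
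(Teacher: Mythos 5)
Your proposal is correct and takes essentially the same route as the paper's proof: the paper likewise derives $A_\Delta = 2^{k-a-1}-4$ from the MacWilliams identities (equivalent to your power-moment computation, using $B_1=B_2=0$ and $A_\Delta = A_{3\Delta}$), decomposes $C'$ via Theorem~\ref{thm:binary}, and runs the same finite enumeration of block decompositions against the length budget $n_{\eff} \leq 4\Delta$ and the requirement that $A_\Delta + 4$ be a power of $2$. Even your final step of rewriting the four-fold direct sum of $\Delta\cdot\Sim_2(1)$ as $\Delta\cdot\RM_2(2)\oplus\Delta\cdot\RM_2(2)$ mirrors the paper's merging of lines~11 and~17 in Table~\ref{tbl:application}.
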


\begin{Proof}
	We have $A_{4\Delta} = 1$.
	As $C$ is projective, $B_1 = B_2 = 0$.
	By the MacWilliams identities, we compute the weight enumerator of $C$ as 
	\begin{align*}
		A_0 & = 1\text{,} \\
		A_\Delta & = 2^{k-a-1} - 4\text{,} \\
		A_{2\Delta} & = 2^k -2^{k-a} + 6\text{,} \\
		A_{3\Delta} & = 2^{k-a-1} - 4\text{,} \\
		A_{4\Delta} & = 1\text{.}\footnotemark
	\end{align*}
	\footnotetext{It was clear before that $A_{\Delta} = A_{3\Delta}$, as $c\mapsto c + \boldsymbol{1}$ is a bijection between the sets of codewords of weight $\Delta$ and $3\Delta$.}

	By Theorem~\ref{thm:binary}, $C' \cong_0 C_1 \oplus \ldots \oplus C_s$, where the $C_i$ are linear codes of the form
	\begin{enumerate}[(i)]
		\item $C_i = \frac{\Delta}{2^{k_i}-1}\cdot\Sim_2(k_i)$ of dimension $k_i\in\{1,\ldots,a+1\}$ or
		\item $C_i = \frac{\Delta}{2^{k_i}-2}\cdot\RM_2(k_i)$ of dimension $k_i\in\{3,\ldots,a+2\}$ or
		\item $C_i = \frac{\Delta}{2}\cdot \PC_2(k_i)$ of dimension $k_i \geq 4$.
	\end{enumerate}
	We have $n_{\eff}(C') \leq n_{\eff}(C)$ and $A_\Delta(C') = A_\Delta(C)$, which gives the conditions
	\[
		n_{\eff}(C_1) + \ldots + n_{\eff}(C_s) \leq 4\Delta
		\quad\text{and}\quad
		A_\Delta(C_1) + \ldots + A_\Delta(C_s) = \frac{1}{2\Delta}\#C - 4\text{.}
	\]
	The values $A_\Delta$ and $n_{\eff}$ of the codes $C_i$ are summarized below.
	\[
                \begin{array}{lll}
                        C_i                                       & n_{\eff}(C_i)                      & A_\Delta(C_i) \\
                        \hline
                        \frac{\Delta}{2^{k_i-1}}\cdot \Sim_2(k_i) & 2\Delta - \frac{\Delta}{2^{k_i-1}} & 2^{k_i} - 1 \\
                        \frac{\Delta}{2^{k_i-2}}\cdot \RM_2(k_i)  & 2\Delta                            & 2^{k_i} - 2 \\
                        \frac{\Delta}{2}\cdot \PC_2(k_i)          & \frac{\Delta}{2}\cdot k_i+1        & \binom{k_i+1}{2}
                \end{array}
	\]
	Sorting the eligible codes $C_i$ by their effective length, we get
	\begin{multline*}
		\underbrace{n_{\eff}(\Delta\cdot\Sim_2(1))}_{=2\Delta - \Delta = \Delta} < \underbrace{n_{\eff}\!\left(\frac{\Delta}{2}\cdot\Sim_2(2)\right)}_{=2\Delta - \frac{1}{2}\Delta = \frac{3}{2}\Delta} < \ldots < \underbrace{n_{\eff}(\Sim_2(a+1))}_{=2\Delta - 1}  \\
		< \underbrace{n_{\eff}\!\left(\frac{\Delta}{2}\cdot\RM_2(3)\right) = \ldots = n_{\eff}(\RM_2(a+2))}_{=2\Delta} \\
		< \underbrace{n_{\eff}\!\left(\frac{\Delta}{2}\cdot\PC_2(4)\right)}_{=\frac{5}{2}\Delta} < \underbrace{n_{\eff}\!\left(\frac{\Delta}{2}\cdot\PC_2(5)\right)}_{=3\Delta} < \underbrace{n_{\eff}\!\left(\frac{\Delta}{2}\cdot\PC_2(6)\right)}_{=\frac{7}{2}\Delta} < \underbrace{n_{\eff}\!\left(\frac{\Delta}{2}\cdot\PC_2(7)\right)}_{=4\Delta}\text{.}
	\end{multline*}
	Now we enumerate the possible codes $C'$ systematically by the restriction on $n_{\eff}(C_i)$.
	The result is shown in Table~\ref{tbl:application}.
	From the expression $A_\Delta = 2^{k-a-1} - 4$ we see that $A_\Delta + 4 = 2^{k-a-1}$ must be a power of $2$.
	In case that this is possible, the resulting value of $k$ and possibly the required condition are displayed in corresponding columns.
	\begin{table}
	\caption{The decompositions $C' \cong_0 C_1 \oplus \ldots \oplus C_s$}
	\label{tbl:application}
	\[
		\begin{array}{rllll}
			\text{\textnumero} & C' \cong_0                                                                                         & A_\Delta              & k           & \text{condition}  \\
			\hline
			 1 & \boldsymbol{0}                                                                                         &  0                    & a+3 \\
			 2 & \frac{\Delta}{2}\cdot\PC_2(7)                                                                      & 28                    & a+6 \\
			 3 & \frac{\Delta}{2}\cdot\PC_2(6)                                                                      & 21                    & - \\
			 4 & \frac{\Delta}{2}\cdot\PC_2(5)                                                                      & 15                    & - \\
			 5 & \frac{\Delta}{2}\cdot\PC_2(5) \oplus \Delta\cdot\Sim_2(1)                                          & 16                    & - \\
			 6 & \frac{\Delta}{2}\cdot\PC_2(4)                                                                      & 10                    & - \\
			 7 & \frac{\Delta}{2}\cdot\PC_2(4) \oplus \Delta\cdot\Sim_2(1)                                          & 11                    & - \\
			 8 & \frac{\Delta}{2}\cdot\PC_2(4) \oplus \frac{\Delta}{2}\cdot\Sim_2(2)                                & 13                    & - \\
			 9 & \frac{\Delta}{2^{k_1-2}}\cdot\RM_2(k_1)                                                            & 2^{k_1} - 2           & - \\
			10 & \frac{\Delta}{2^{k_1-2}}\cdot\RM_2(k_1)\oplus\frac{\Delta}{2^{k_2-1}}\cdot\Sim_2(k_2)              & 2^{k_1} + 2^{k_2} - 3 & - \\
			11 & \frac{\Delta}{2^{k_1-2}}\cdot\RM_2(k_1)\oplus\frac{\Delta}{2^{k_2-2}}\cdot\RM_2(k_2)               & 2^{k_1} + 2^{k_2} - 4 &  a + 2 + k_1 & k_1 = k_2\\
			13 & \frac{\Delta}{2^{k_1-2}}\cdot\RM_2(k_1)\oplus\Delta\cdot\Sim_2(1)\oplus\Delta\cdot\Sim_2(1)        & 2^{k_1}               & - \\
			13 & \frac{\Delta}{2^{k_1-1}}\cdot\Sim_2(k_1)                                                           & 2^{k_1} - 1           & - \\
			14 & \frac{\Delta}{2^{k_1-1}}\cdot\Sim_2(k_1)\oplus\frac{\Delta}{2^{k_2-1}}\cdot\Sim_2(k_2)             & 2^{k_1} + 2^{k_2} - 2 & a+4          & \{k_1,k_2\} = \{1,2\} \\
			15 & \frac{\Delta}{2^{k_1-1}}\cdot\Delta\cdot\Sim_2(k_1)\oplus\Sim_2(1)\oplus\Delta\cdot\Sim_2(1)       & 2^{k_1} + 1           & - \\
			16 & \Delta\cdot\Sim_2(2)\oplus\Delta\cdot\Sim_2(2)\oplus\Delta\cdot\Sim_2(1)                           & 7                     & - \\
			17 & \Delta\cdot\Sim_2(1)\oplus\Delta\cdot\Sim_2(1)\oplus\Delta\cdot\Sim_2(1)\oplus\Delta\cdot\Sim_2(1) & 4                     & a+4
		\end{array}
	\]
	\end{table}
	Using $\Sim_2(1)\oplus\Sim_2(1) = \RM_2(2)$, lines~11 and 17 of the table can be merged.
\end{Proof}

\begin{Theorem}
	\label{theorem:4delta}
	Let $a \geq 2$ be an integer, $\Delta = 2^a$ and $C$ be a binary projective $\Delta$-divisible $[4\Delta,k]$-code.
	Then $k \leq 2a + 4$.
	In the case of maximum dimension $k = 2a + 4$, we have
	\begin{enumerate}[(i)]
		\item $C\cong \RM_2(a+2) \oplus \RM_2(a+2)$ or
		\item $a=2$ and $C\cong\langle 2\cdot\PC_2(7),(1111111100000000)\rangle$
	\end{enumerate}
\end{Theorem}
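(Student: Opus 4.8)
The plan is to reduce to the case $\mathbf 1\in C$ and then read off the dimension from Lemma~\ref{lemma:4_delta_subcode}. For an arbitrary binary projective $\Delta$-divisible $[4\Delta,k]$-code $C$, I would pass to $D=\langle C,\mathbf 1\rangle$. This code still has length $4\Delta$, it is again $\Delta$-divisible because $\wt(c+\mathbf 1)=4\Delta-\wt(c)$ for the full-length code $C$, and it is again projective because adjoining the all-one row to a generator matrix of $C$ keeps all columns pairwise distinct and non-zero. Since $D$ contains $\mathbf 1$, Lemma~\ref{lemma:4_delta_subcode} applies and shows $\dim D\in\{a+6,\;a+\ell+2\ (\ell\le a+2),\;a+4,\;a+3\}$. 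For $a\ge 2$ the largest of these values is $2a+4$, attained in case~(ii) of the lemma at $\ell=a+2$. Hence $\dim D\le 2a+4$, and as $C\subseteq D$ we obtain $k\le 2a+4$. If $k=2a+4$, then $\dim D=\dim C$ forces $D=C$, so $\mathbf 1\in C$.

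For the extremal case $k=2a+4$ I would now apply Lemma~\ref{lemma:4_delta_subcode} directly to $C$, which we just saw contains $\mathbf 1$. Among its four cases only two yield $k=2a+4$: case~(ii) with $\ell=a+2$, valid for all $a\ge 2$, and case~(i), which forces $a=2$. Writing $C'$ for the subcode of $C$ spanned by the codewords of weight $\Delta$, in the first of these we have $C'\cong_0\RM_2(a+2)\oplus\RM_2(a+2)$. Its effective length $2\cdot 2^{a+1}=4\Delta$ equals the length of $C$, so $C'$ is full-length and $\dim C'=2(a+2)=2a+4=\dim C$. Consequently $C=C'\cong\RM_2(a+2)\oplus\RM_2(a+2)$, which is alternative~(i) of the theorem.

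There remains case~(i), where $a=2$, $\Delta=4$, the code $C'\cong_0 2\cdot\PC_2(7)$ is full-length of length $16$, and $\dim C'=7=k-1$, so $C=\langle C',w\rangle$ for a single word $w\notin C'$. Since $\mathbf 1\in C'$ and $C'$ already contains every codeword of weight $\Delta$, the word $w$ has weight neither $\Delta$ nor $3\Delta$ (as $w+\mathbf 1\in C$) nor $0$ nor $4\Delta$; by $\Delta$-divisibility its weight is $2\Delta=8$. The $16$ positions of $C'=2\cdot\PC_2(7)$ split into $8$ pairs on which every codeword of $C'$ is constant, and the projectivity of $C$ forces $w$ to differ on the two positions of each pair, i.e.\ to carry exactly one entry $1$ per pair. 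Swapping the two positions inside a pair is an automorphism of $C'$, and composing such swaps normalises $w$ to $(1111111100000000)$. This gives $C\cong\langle 2\cdot\PC_2(7),(1111111100000000)\rangle$, which is alternative~(ii).

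The step I expect to be the main obstacle is the analysis of case~(i): verifying that the extension word necessarily has weight $2\Delta$, that projectivity is equivalent to it carrying exactly one $1$ in each of the eight constant pairs of $2\cdot\PC_2(7)$, and that the within-pair transpositions are automorphisms of $2\cdot\PC_2(7)$ acting transitively on the admissible words $w$. This requires keeping careful track of the repeated column structure of $2\cdot\PC_2(7)$; the bound $k\le 2a+4$ and the identification of alternative~(i), by contrast, follow directly from Lemma~\ref{lemma:4_delta_subcode} together with elementary dimension counts.
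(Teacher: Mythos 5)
Your proposal is correct and follows essentially the same route as the paper: reduce to the case $\mathbf{1}\in C$, apply Lemma~\ref{lemma:4_delta_subcode} to bound $k$ and isolate the two extremal cases, then identify $C$ with $\RM_2(a+2)\oplus\RM_2(a+2)$ respectively with the extension of $2\cdot\PC_2(7)$. The only local difference is in the case $a=2$, $C'\cong 2\cdot\PC_2(7)$: the paper obtains a weight-$8$ extension word by comparing $A_{2\Delta}(C')=\binom{8}{4}=70$ with $A_{2\Delta}(C)=198$, whereas you deduce the weight $2\Delta$ directly from $\Delta$-divisibility together with $\mathbf{1}\in C'$ and the fact that $C'$ contains all weight-$\Delta$ codewords, and you spell out the pair-swap automorphism argument for uniqueness that the paper leaves implicit --- both variants are fine.
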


\begin{Proof}
	For the investigation of the maximum dimension, we may assume $\boldsymbol{1}\in C$.
	Let $C'$ be the subcode of $C$ spanned by the codewords of weight $\Delta$.
	Lemma~\ref{lemma:4_delta_subcode} gives $k \leq 2a+4$.
	There are two cases where $k=2a+4$ is possible.

	Case 1: $C' \cong \RM_2(a+2) \oplus\RM_2(a+2)$.
	Because of $\dim(C') = 2a+4$ we have $C = C'$.

	Case 2: $a=2$ and $C' \cong 2\cdot\PC_2(7)$.
	Here $\dim(C') = 7$ and $\dim(C) = 8$, so $C = \langle C', c\rangle$ with $c\notin C'$.
	We have $A_{2\Delta}(C') = \binom{8}{4} = 70$ and from the expression in the proof of Lemma~\ref{lemma:4_delta_subcode} we have $A_{2\Delta}(C) = 198$, so we may assume $\wt(c) = 8$.
	To get a projective code $C$, the support of $c$ has to contain exactly one member of each repeated pair of positions of $C'$.
	By $A_{2\Delta}(C) - A_{2\Delta}(C') = 198 - 70 = 128 = 2^7$, all codewords in $C\setminus C'$ are of weight $8$, so $C$ is $4$-divisible.
	Moreover, different choices of $c$ clearly lead to isomorphic codes.
\end{Proof}

\begin{Remark}
Theorem~\ref{theorem:4delta} is essentially \cite[Thm. 4]{Liu-2010-IntJInfCodingTheory1[4]:355-370} in the case of projective codes.
This result has been the basis for the recent classification of all binary $\Delta$-divisible three-weight codes of length $n \leq 4\Delta$ of maximum possible dimension in \cite[Th.~3.2]{KKSSW}.

The case $a=2$ is about $4$-divisible $[16,8]_2$-codes, which are binary Type~II self-dual codes of length 16.
These codes have first been classified in \cite{Pless-1972-DM3[1-3]:209-246}, where our two codes are called $A_8\oplus A_8$ and $E_{16}$.
Up to isomorphism, these are the only Type~II self-dual codes of length $16$.
\end{Remark}

\providecommand{\bysame}{\leavevmode\hbox to3em{\hrulefill}\thinspace}
\providecommand{\MR}{\relax\ifhmode\unskip\space\fi MR }
\providecommand{\MRhref}[2]{%
  \href{http://www.ams.org/mathscinet-getitem?mr=#1}{#2}
}
\providecommand{\href}[2]{#2}

\end{document}